\documentclass[12pt,english,times]{elsarticle}
\setcounter{secnumdepth}{2}
\setcounter{tocdepth}{2}
\usepackage{color}
\usepackage{amsthm}
\usepackage{amsmath}
\usepackage{amssymb}
\usepackage{graphicx}
\parindent0cm

\makeatletter

\theoremstyle{plain}

\theoremstyle{plain}

\theoremstyle{plain}

\ifx\proof\undefined
\newenvironment{proof}[1][\protect\proofname]{\par
\normalfont\topsep6\p@\@plus6\p@\relax
\trivlist
\itemindent\parindent
\item[\hskip\labelsep
\scshape
#1]\ignorespaces
}{%
\endtrivlist\@endpefalse
}
\providecommand{\proofname}{Proof}
\fi
\theoremstyle{plain}

\usepackage{babel}

\providecommand{\lemmaname}{Lemma}
\providecommand{\propositionname}{Proposition}
\providecommand{\theoremname}{Theorem}

\newtheorem{proposition}{Proposition}[section]
\newtheorem{theorem}[proposition]{Theorem}

\newtheorem{lemma}[proposition]{Lemma}
\newtheorem{definition}[proposition]{Definition} 
 
\newtheorem{example}[proposition]{Example}

\usepackage{a4wide}
\usepackage[utf8x]{inputenc}
\usepackage{amsmath}
\usepackage{amsfonts}
\usepackage{amssymb}
\usepackage{enumerate}

\usepackage{graphicx}        %
\newcommand{\R}{\mathbb{R}}
\newcommand{\N}{\mathbb{N}}

\newcommand{\im}{{\rm im}\,}
\renewcommand{\ker}{{\rm ker}\,}

\newcommand{\rank}{{\rm rank}\,}

\newcommand{\dom}{{\rm dom}\,}

\newcommand{\pPi}{\mathnormal{\Pi}} %

\begin{document}

\title{Questions concerning  differential-algebraic operators:\\
Toward a reliable direct numerical treatment of differential-algebraic equations }

\author[KTH]{Michael Hanke}

\ead{hanke@nada.kth.se}

\address[KTH]{KTH Royal Institute of Technology, School of Engineering Sciences, 10044 Stockholm, Sweden}

\author[HUB]{Roswitha M{\"a}rz}

\ead{maerz@math.hu-berlin.de}

\address[HUB]{Humboldt University of Berlin, Institute of Mathematics, D-10099
Berlin, Germany}

\begin{abstract}
The nature of so-called differential-algebraic operators and their approximations is constitutive for the direct treatment of higher-index differential-algebraic equations. We treat first-order differential-algebraic operators in detail and contribute to justify the overdetermined polynomial collocation applied to higher-index differential-algebraic equations. Besides, we discuss several practical aspects concerning higher-order differential-algebraic operators and the associated equations.
\end{abstract}
\begin{keyword}
differential-algebraic operator, essentially ill-posed problem, least-squares problem, polynomial collocation, nonclosed-range operator, higher index, first-order differential-algebraic equation, higher-order differential-algebraic equation
\end{keyword}

\maketitle
\section{Introduction}\label{s.Introduction}
To a large extend, in the framework of numerical analysis, differential-algebraic equations (DAEs), in particular higher-index ones, are recognized as special ordinary differential equations (ODEs), and, accordingly, they are treated by means of derivative-array systems and an involved or preceded expensive index-reduction.  

In contrast, the experiments and theoretical contributions reportet in \cite{AscherPetzold1992,HMTWW,HMT} give rise to the  conjecture that next to the existing derivative-array based methods there is further potential toward a reliable direct numerical treatment of DAEs. The main aim of this note is to fill the gap between the theoretical convergence results for least-squares collocation methods \cite{HMTWW,HMT} and its practical realization. Moreover, we will explore the relevant scope concerning higher-order DAEs.
 
Recap well-known facts concerning first-order ordinary differential operators: Let $B(t)\in\R^{m,m}$ be continuous. The initial value problem (IVP)
\begin{align*}
 x'(t)+B(t)x(t)=g(t),\quad t\in [a,b], \quad x(a)=d\in \R^{m},
\end{align*}
has the unique solution
\begin{align*}
 x(t)=X(t,a)d+(Vg)(t),\; t\in [a,b], \quad d\in\R^m,\quad
 (Vg)(t)= \int_{a}^{t}X(t,s)g(s){\rm ds}.
\end{align*}
Introduce the operators $Tx:=x'+Bx$,
$T_{IC}x:=x(a)$,\, $\mathcal{T}x:=(Tx, T_{IC}x)$, such that the IVP is represented by the operator equation $\mathcal{T}x=(g,d)$ and the inverse of $\mathcal T$ is given by
\begin{align}\label{ODE}
 \mathcal{T}^{-1}(g,d)(t)=X(t,a)d+(Vg)(t),\; t\in [a,b].
\end{align}
As it is well known, these operators have useful properties making ODE problems easily accessible for the  numerical treatment: $ T:\mathcal C^1([a,b],\R^m)\rightarrow\mathcal  C([a,b],\R^m)$ is bounded and surjective, $\dim\ker T = m$, and hence, $T$ is fredholm, and 
$ \mathcal{T}:\mathcal C^1([a,b],\R^m)\rightarrow\mathcal  C([a,b],\R^m)\times \R^{m}$ is bounded and bijective, thus a homeomorphism. The same properties persist in the Hilbert space setting $ T:H^1((a,b),\R^m)\rightarrow L^{2}((a,b),\R^m)$, $ \mathcal{T}:H^1((a,b),\R^m)\rightarrow L^{2}((a,b),\R^m)\times \R^{m}$.
Such properties will survive for index one differential-algebraic equations in an appropriately modified
version. The situation in the higher index case is much more involved.
\medskip

The paper is organized as follows: Section~\ref{s.Example} deals with a small first-order index-three example and foreshadows the potential of overdetermined polynomial collocation. Section~\ref{s.Order1} is devoted to features of regular first-order arbitrary-index differential-algebraic operators acting in reasonable Hilbert spaces. We turn back to the overdetermined polynomial collocation in Section~\ref{s.OPC}, now considered by operators representing finite-dimensional approximations and provide with the main Theorem~\ref{t.main} sufficient conditions justifying overdetermined collocation.
Section~\ref{s.H} surveys higher-order differential-algebraic operators in this context.

Below, though using different norms we  mark the related norms by extra tags merely on those places where  confusions are actually imminent.

\section*{List of some symbols and abbreviations}
\begin{tabular}{ll}
$\R$, $\N$& sets of real and natural numbers\\
$\mathcal L(X,Y)$& for linear spaces $X,Y$ the space of linear operators\\
$\mathcal C(\mathcal I, X)$& space of continuous functions mapping $\mathcal I$ into $X$\\
$\mathcal C^k(\mathcal I, X)$& space of $k$-times continuously differentiable functions mapping $\mathcal I$ into $X$\\
$L^{2}:=L^{2}((a,b), \R^{m})$& Lebesque space of  functions mapping $(a,b)$ into $\R^{m}$\\
$H^{k}:=H^{k}((a,b), \R^{m})$& $:= W^{k,\,2}((a,b), \R^{m})$, Sobolev space of functions mapping $(a,b)$ into $\R^{m}$\\
$H^{1}_{D}:=H^{1}_{D}((a,b), \R^{m})$& $:=\{x\in L^{2} :\,Dx\in H^1$\} \\
$K^*$& adjoint of $K$\\
$K^-$& generalized inverse of $K$, $KK^-K=K$, $K^-KK^-=K^-$\\
$K^+$& Moore-Penrose inverse of $K$\\
$\ker K$& nullspace (kernel) of  $K$\\
$\im K$&  image (range) of $K$\\
$\langle \cdot,\cdot \rangle$& scalar product in $\R^m$\\
$(\cdot,\cdot)$& scalar product in function spaces\\
$|\cdot|$& Euclidian vector norm and spectral norm of matrices\\
$\|\cdot\|$& norm in function spaces, operator norm\\
$\oplus$& topological direct sum\\
DAE&differential-algebraic equation\\
ODE&ordinary differential equation\\
IVP, BVP&initial value problem, boundary value problem
\end{tabular} 

\section{A symptomatic example}\label{s.Example}
The DAE 
	\begin{align*}
		x'_{2}(t)+x_{1}(t) & =g_{1}(t),\\
		t\eta x'_{2}(t)+x'_{3}(t)+(\eta+1)x_{2}(t) & =g_{2}(t),\\
		t\eta x_{2}(t)+x_{3}(t) & =g_{3}(t),\quad t\in [0,1],
	\end{align*} 
has index 3 uniformly for every $\eta\in \R$. To each sufficiently smooth $y$ there exists ecactly one solution $x$.
The DAE is somewhat snaky, so that step by step integration methods generate waste unless an a priori or a posteriori incorporated  regularization via derivative array systems is incorporated, e.g., \cite[Example 8.5]{CRR}.

Here we set $\eta=-2$ and determine $g_1$, $g_2$, $g_3$ sucht that the solution becomes
	\begin{align*}
	x_{1}(t) =e^{-t}\sin t,\quad
	x_{2}(t) =e^{-2t}\sin t,\quad
	x_{3}(t) =e^{-t}\cos t.
	\end{align*}
We set $N\geq 1$ and approximate the solution components $x_{2}$ and $x_{3}$ by  continuously connected piecewise polynomials of degree $N$ and the component $x_{1}$ by possibly discontinuous piecewise polynomials of degree $N-1$ on uniform partitions of the interval $[0,1]$ with stepsize $h=1/n$. On each subinterval we choose $M$ collocation points. 

The classical or standard collocation procedures use $M=N$ collocation points per subinterval (cf.~\cite{LMW}). This results in $3nN$ equations to determine the $3nN+2$ parameters of the unknown collocation solution. In order to obtain a unique collocation solution, we pose additionally two consistent initial condition.
  
In contrast, choosing $M>N$ leads to an overdetermined collocation system which can be treated by a least-squares solver. Corresponding first experiments are reported in \cite{HMTWW} with $M=2N+1$. Table \ref{t.1} shows the componentwise maximal error in both versions: The standard collocation generates waste as expected, however, the overdetermined least-squares collocation  provides surprisingly nice results.
\begin{table}
 \caption{Componentwise maximal error, polynomial degree $N=3$}
\label{t.1}
\centering{}%
\begin{tabular}{c|ccc|ccc}
 $n$& \multicolumn{3}{c|}{Standard collocation} & \multicolumn{3}{c}{Least-squares collocation} \\
    &$i=1$ &$i=2$ & $i=3$ & $i=1$ & $i=2$ & $i=3$ \\
\hline
 20&5.56e+006 &3.03e+004 & 5.99e+004 & 2.09e-4 & 1.10e-06 & 2.18e-06 \\
 40&1.55e+017 &4.23e+014 & 8.41e+014 & 5.03e-5 & 1.31e-07 & 2.65e-07 \\
 80&5.70e+038 &7.76e+035 & 1.55e+036 & 1.23e-5 & 1.60e-08 & 3.20e-08 \\
160&3.36e+082 &2.29e+079 & 4.57e+079 & 3.06e-6 & 1.98e-09 & 4.00e-09 \\
320&4.93e+170 &1.68e+167 & 3.35e+167 & 7.68e-7 & 2.50e-10 & 5.00e-10
\end{tabular}
\end{table}
\medskip

Further experiments (cf.~\cite{HMTWW,HMT})  give rise to the conjecture that a much smaller number $M\geq N+1$ will do in general and that the special position of the collocation points does not matter. Tables \ref{t.2} and \ref{t.3} are quoted from \cite{HMT} to this effect.  At this place it should be noted that the overdetermined collocation method is treated in \cite{HMTWW,HMT} against the background of a least-squares problem in Hilbert spaces. For this reason, the errors are now measured  in $L^{2}$ and $H^{1}$ norms. The theoretically confirmed convergence order is $N-\mu+1=1$, but we observe order $2$. To date there is no theoretical recognition of this impressive, nice behavior!
\medskip

Table \ref{t.3} indicates the further interesting observation that even though no convergence is proved for $N=1$ the numerical approximations remain bounded and seem to converge with order 0.4.
 \begin{table}
	\caption{Error $(\rVert e_{1}\rVert^{2}_{L^{2}}+\rVert e_{2}\rVert^{2}_{H^{1}}+\rVert e_{3}\rVert^{2}_{H^{1}})^{\frac{1}{2}}$ of the collocation solution for $N=3$}
\label{t.2}
	\centering{}%
\begin{tabular}{r|c|c|c|c}
   & \multicolumn{2}{c|}{$M=2N+1$ uniform points} & \multicolumn{2}{c}{$M=N+1$ Gaussian points} \\
   \hline
   $n$ & error &order 
& error & order \\ 
\hline   
		10 & 6.31e-4 &      & 6.46e-4 &     \\
		20 & 1.44e-4 & 2.1  & 1.45e-4 & 2.2 \\
		40 & 3.47e-5 & 2.1  & 3.47e-5 & 2.1 \\
		80 & 8.53e-6 & 2.0  & 8.53e-6 & 2.0 \\
		160 &2.12e-6 & 2.0   &2.12e-6 & 2.0 \\
		320 & 5.27e-7 & 2.0  &5.27e-7 & 2.0   
	\end{tabular}
\end{table}
 \begin{table}
	\caption{Error $(\rVert e_{1}\rVert^{2}_{L^{2}}+\rVert e_{2}\rVert^{2}_{H^{1}}+\rVert e_{3}\rVert^{2}_{H^{1}})^{\frac{1}{2}}$  of the collocation solution for $N=1$}
\label{t.3}
	\centering{}%
\begin{tabular}{r|c|c|c|c}
   & \multicolumn{2}{c|}{$M=3$ uniform points} & \multicolumn{2}{c}{$M=2$ Gaussian points} \\
   \hline
$n$ & error & order & error & order\\ \hline
		10 & 5.65e-1 &         & 5.65e-1 &        \\
		20 & 3.93e-1 & 0.5  & 3.93e-1 & 0.5  \\
		40 & 2.49e-1 & 0.6  & 2.49e-1 & 0.7  \\
		80 & 1.85e-1 & 0.4  & 1.85e-1 & 0.4  \\
		160 &1.42e-1 & 0.4  & 1.42e-1 & 0.4  \\
		320 & 1.12e-1 & 0.3  & 1.12e-1 & 0.3 
	\end{tabular}
\end{table}
\medskip

The bounded operator $T: \{x\in \mathcal C([0,1],\R^{3}):x_{2},x_{3}\in \mathcal C^{1}([0,1],\R) \}   \rightarrow \mathcal C([0,1],\R^{3})$ associated with our test DAE  reads in detail
\begin{align*}
 Tx&=\begin{bmatrix}
       x_{2}'+x_{1}\\t\eta x_{2}'+x_{3}'+(\eta+1)x_{2}\\t\eta x_{2}+x_{3}
      \end{bmatrix},\; x_{1}\in \mathcal C([0,1],\R),\; x_{2},x_{3}\in \mathcal C^{1}([0,1],\R),
\end{align*}
and one immediately checks that 
\begin{align*}
\ker T=\{0\},\quad \im T=\{g\in \mathcal C([0,1],\R^{3}): g_{3}\in \mathcal C^{1}([0,1],\R),\,g_{2}-g_{3}'\in \mathcal C^{1}([0,1],\R)\},
\end{align*}
further  $\mathcal T=T$, since $T$ itself is injective, and hence, no initial or boundary conditions are allowed. The inverse operator 
\begin{align*}
 T^{-1}g= \begin{bmatrix}
       g_{1}-(g_{2}-g_{3}')'\\g_{2}-g_{3}'\\g_{3}-t\eta(g_{2}-g_{3}')
      \end{bmatrix},\quad g\in \im T.
\end{align*}
is unbounded in this setting since $\im T$ is a nonclosed subset in $\mathcal C([0,1],\R^{3})$. This is a fundamental contrast to the case of regular ODEs. Now we do not have  closed range and fredholm properties, and the main ingredient of the inverse of $\mathcal T$ is not a nice Volterra operator but a higher-order differential operator. We have 
\begin{align*}
 T^{-1}g= &\begin{bmatrix}
       g_{1}-g'_{2}+g''_{3}\\g_{2}-g'_{3}\\g_{3}-t\eta(g_{2}-g'_{3})
      \end{bmatrix}=
  \begin{bmatrix}
   0&0&1\\0&0&0\\0&0&0
  \end{bmatrix}g''+  
  \begin{bmatrix}
   0&-1&0\\0&0&-1\\0&0&t\eta
  \end{bmatrix}g'+
  \begin{bmatrix}
   1&0&0\\0&1&0\\0&-t\eta&1
  \end{bmatrix}g,\\
      & g\in \{v\in \mathcal C([0,1],\R^{3}):v_{2}\in \mathcal C^{1}([0,1],\R),  v_{3}\in \mathcal C^{2}([0,1],\R) \}\subset\im T.
\end{align*}
Analogous situations arise also, e.g., in the settings 
\begin{align*}
 T&: \mathcal C^{1}([0,1],\R^{3})  \rightarrow \mathcal C([0,1],\R^{3}),\\
 T&: \{x\in L^{2}((0,1),\R^{3}):x_{2},x_{3}\in H^{1}([0,1],\R) \}   \rightarrow  L^{2}((0,1),\R^{3}),\\
 T&: H^{1}((0,1),\R^{3})  \rightarrow L^{2}((0,1),\R^{3}),
\end{align*}
with their natural norms, \cite{Mae2014}.
\section{Regular first-order DA operators in a Hilbert space setting}\label{s.Order1}
\subsection{Setting}\label{subs.Setting}
We begin this part by considering the operator, henceforth called a \emph{DA operator},
\begin{equation*}
\mathring{T} \in\mathcal L(X,Y), \quad \mathring{T}x:=Ex'+Fx,\quad x\in \dom\mathring{T}:=\mathcal C^1([a,b],\R^m)\subseteq X, \;
\end{equation*}
so that the operator equation $\mathring Tx=g$ represents the so-called standard form DAE $Ex'+Fx=g$.
The coefficient functions $E, F:[a,b]\rightarrow \mathcal L(\R^{m},\R^{m})$ are at least  continuous. The function spaces $X$ and $Y$ will be specified later.

We suppose a nontrivial leading coefficient $E$ the nullspace of which is a $\mathcal C^1$-subspace varying in $\R^m$. We are looking for a Hilbert-space setting with a bounded DA operator.
\medskip

First we try $X=Y= L^{2}((a,b),\R^{m})$, with the usual norms.
In this setting, $\mathring T$ is unbounded, but densely defined and closable, see \cite{Mae2014}.
The closure of $\mathring T$,\quad $T:\dom T\subseteq X\rightarrow Y$, 
   is densely defined and closed, but also unbounded. We apply the usual graph-norm approach:
The space $X_{T}:=\dom T$  equipped with graph-norm,
\[ \rVert x\lVert_{T}=\rVert x\lVert+\rVert Tx\lVert,\; x\in X_{T}:=\dom T,\]
  is complete and $T:X_{T}\rightarrow Y$ is a bounded operator. 
 How can we specify $\dom T$ and $T$? To answer this question we need so-called proper factorizations of $E$.
\begin{definition}\label{d.properfac}
 The factorization $E=AD$ is called proper, if $0<k\leq m$,\; $A:[a,b]\rightarrow \mathcal L(\R^{k},\R^{m})$ is continuous, $D:[a,b]\rightarrow \mathcal L(\R^{m},\R^{k})$ is continuously differentiable,  and 
 \begin{align*}
 \ker A(t)\oplus\im D(t)=\R^k,\;t\in [a,b].
\end{align*}
\end{definition}
There are many possible proper factorizations. We fix an arbitrary one $E=AD$, 
put $B=F-AD'$, and observe that
\begin{align*}
 \mathring Tx=Ex'+Fx&=ADx'+Fx=A(Dx)'+(F-AD')x\\
 &=A(Dx)'+Bx,\quad \text{for all}\quad x\in\dom\mathring T.
\end{align*}
 Observe that $A(Dx)'+Bx=g$ is a DAE with so-called \textit{properly stated leading term}. This indicates how the closure $T$ as well as $\dom T$ look like.

\begin{theorem}\label{t.represent}
\begin{enumerate}[(i)]
 \item If $D:[a,b]\rightarrow \mathcal L(\R^{m},\R^{k})$ is continuously differentiable and has constant rank, then the function space 
\[
H^{1}_{D}((a,b),\R^{m})=\{x\in L^{2}((a,b),\R^{m}):Dx\in H^{1}((a,b),\R^{k})\}
\]
equipped with the inner product
\[
 (x,\bar x)_{H^{1}_{D}}=(x,\bar x)_{L^{2}}+((Dx)',(D\bar x)')_{L^{2}}
 \]
is a Hilbert space.
\item For each proper factorization $E=AD$, it results that
\begin{align*}
 \dom T=H^{1}_{D}((a,b), \R^{m}),\quad Tx=A(Dx)'+Bx, \; x\in \dom T.
 \end{align*}
 \item The norm $\lVert \cdot\lVert_{H^{1}_{D}}$ is
equivalent to the graph-norm $\rVert\cdot\lVert_{T}$.
\end{enumerate}
\end{theorem}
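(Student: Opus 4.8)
The plan is to reduce everything to the single uniform estimate
\begin{equation*}
\|(Dx)'\|_{L^{2}}\le C\bigl(\|x\|_{L^{2}}+\|\mathring Tx\|_{L^{2}}\bigr),\qquad x\in\dom\mathring T=\mathcal C^{1}([a,b],\R^{m}),\tag{$\star$}
\end{equation*}
from which (ii) and (iii) both follow by a standard closure argument; the two genuinely analytic ingredients are the closedness of the distributional derivative and the existence of a continuously differentiable Moore--Penrose inverse of the constant-rank function $D$ together with a continuous generalized inverse of $A$.

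For (i), symmetry and bilinearity of $(\cdot,\cdot)_{H^{1}_{D}}$ are clear, and positive definiteness is immediate because the summand $\|x\|_{L^{2}}^{2}$ already controls $x$. For completeness I would take a sequence $(x_{n})$ that is Cauchy in $H^{1}_{D}$; then $(x_{n})$ and $((Dx_{n})')$ are Cauchy in $L^{2}$, so $x_{n}\to x$ and $(Dx_{n})'\to w$ in $L^{2}$ for some $x,w\in L^{2}$. Since $D$ is bounded and continuous, multiplication by $D$ is bounded on $L^{2}$, hence $Dx_{n}\to Dx$. Passing to the limit in $\int(Dx_{n})\varphi'=-\int(Dx_{n})'\varphi$ for $\varphi\in\mathcal C^{\infty}_{c}$ shows $Dx\in H^{1}$ with $(Dx)'=w$; thus $x\in H^{1}_{D}$ and $x_{n}\to x$ in $H^{1}_{D}$. (Constant rank is not needed for this part.)

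The crux is ($\star$), and I expect it to be the main obstacle. For $x\in\mathcal C^{1}$ put $w:=Dx\in\mathcal C^{1}$, so $w(t)\in\im D(t)$. Because $D$ is continuously differentiable of constant rank, $D^{+}$ is continuously differentiable and $P:=DD^{+}$ is a $\mathcal C^{1}$ orthogonal projector onto $\im D$ with $Pw=w$. Differentiating yields $w'=(Pw)'=P'w+Pw'$, hence $(I-P)w'=P'w$ and $\|(I-P)w'\|_{L^{2}}\le\|P'\|_{\infty}\|x\|_{L^{2}}$; this step controls the part of $w'$ annihilated by $A$ using only $\|x\|$. For the complementary part, $Aw'=\mathring Tx-Bx$ gives $APw'=\mathring Tx-Bx-AP'w$, so $\|APw'\|_{L^{2}}\le\|\mathring Tx\|_{L^{2}}+C\|x\|_{L^{2}}$. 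By the proper factorization, $\ker A(t)\cap\im D(t)=\{0\}$ with constant dimensions, so $A$ is fibrewise injective on $\im D=\im P$; choosing the continuous generalized inverse $A^{-}$ adapted to $\R^{k}=\ker A\oplus\im D$ and $\R^{m}=\im A\oplus Z$ gives $A^{-}A=R$, the projector onto $\im D$ along $\ker A$, whence $A^{-}AP=RP=P$. Therefore $Pw'=A^{-}(APw')$ and $\|Pw'\|_{L^{2}}\le C(\|\mathring Tx\|_{L^{2}}+\|x\|_{L^{2}})$; adding the two parts proves ($\star$). The delicate point is that the graph seminorm only sees $Aw'$ while $\ker A\neq\{0\}$; it is the product-rule identity $(I-P)w'=P'w$ together with injectivity of $A$ on $\im D$ that recovers the full derivative, and the regularity bookkeeping is that $P$ must be $\mathcal C^{1}$ (guaranteed by $D\in\mathcal C^{1}$ of constant rank) whereas a merely continuous $A^{-}$ suffices.

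Finally I deduce (ii) and (iii). From $\|\mathring Tx\|_{L^{2}}\le C(\|(Dx)'\|_{L^{2}}+\|x\|_{L^{2}})$ (boundedness of $A,B$) one gets $\|\cdot\|_{T}\le C\|\cdot\|_{H^{1}_{D}}$ on $\mathcal C^{1}$, and ($\star$) gives the reverse bound, so the two norms are equivalent on $\mathcal C^{1}$; by density this extends to $H^{1}_{D}$, giving (iii) once $\dom T=H^{1}_{D}$. For the domain, applying ($\star$) to differences $x_{n}-x_{\ell}$ shows that whenever $x_{n}\to x$ and $\mathring Tx_{n}\to Tx$ in $L^{2}$ (with $x_{n}\in\mathcal C^{1}$), the sequence $(Dx_{n})$ is Cauchy in $H^{1}$, so its $L^{2}$-limit $Dx$ lies in $H^{1}$; hence $\dom T\subseteq H^{1}_{D}$. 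For the reverse inclusion I would establish density of $\mathcal C^{1}$ in $H^{1}_{D}$: splitting $x=D^{+}Dx+(I-D^{+}D)x$, approximating $Dx\in H^{1}$ by smooth $u_{n}$ and $(I-D^{+}D)x\in L^{2}$ by $\tilde z_{n}\in\mathcal C^{1}$, and setting $x_{n}:=D^{+}u_{n}+(I-D^{+}D)\tilde z_{n}\in\mathcal C^{1}$, one checks via $DD^{+}D=D$ and $D(I-D^{+}D)=0$ that $Dx_{n}=DD^{+}u_{n}\to Dx$ in $H^{1}$ and $x_{n}\to x$ in $L^{2}$, i.e.\ $x_{n}\to x$ in $H^{1}_{D}$. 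Density yields $H^{1}_{D}\subseteq\dom T$, and continuity of $x\mapsto A(Dx)'+Bx$ gives the formula $Tx=A(Dx)'+Bx$, completing (ii) and (iii).
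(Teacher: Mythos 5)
Your proposal is correct, and the analytic core coincides with the paper's: your estimate $(\star)$ is exactly the content of the paper's proof of (iii), which writes $A^{-}A(Dx)'=A^{-}Tx-A^{-}Bx$ and then uses the product rule together with $A^{-}A\,Dx=Dx$ to get $(Dx)'=A^{-}Tx+((A^{-}A)'D-A^{-}B)x$. You organize the same computation through the orthoprojector $P=DD^{+}$ and the oblique projector $R=A^{-}A$ onto $\im D$ along $\ker A$ (splitting $w'=Pw'+(I-P)w'$ with $(I-P)w'=P'w$), which is the identical mechanism in slightly different bookkeeping; both hinge on $\ker A\oplus\im D=\R^{k}$ and on $A$ being fibrewise injective on $\im D$. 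Where you genuinely diverge is in parts (i) and (ii): the paper simply cites \cite[Lemma 6.9]{Mae2014} for completeness of $H^{1}_{D}$ and identifies the closure as the biadjoint $T=\mathring T^{**}$ via \cite[Theorem 3.1 (3)]{Mae2014}, a duality argument, whereas you construct the closure directly — $\dom T\subseteq H^{1}_{D}$ from $(\star)$ applied to differences plus closedness of the distributional derivative, and $H^{1}_{D}\subseteq\dom T$ from the density of $\mathcal C^{1}$ in $H^{1}_{D}$ via the splitting $x=D^{+}Dx+(I-D^{+}D)x$. Your route is more elementary and self-contained (no adjoint calculus), at the price of having to verify the density step explicitly; the paper's route gets (ii) for free from the general theory but hides the approximation argument inside the cited reference. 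Your remark that constant rank is not needed for completeness in (i) is accurate (it is needed only to make $D^{+}$, and hence $P$ and the density construction, continuously differentiable). The only cosmetic slips are that the bound on $\|(I-P)w'\|_{L^{2}}$ should carry a factor $\|D\|_{\infty}$ (harmless) and that the paper's own ``Let $x\in\im T$'' in (iii) is evidently a typo for $x\in\dom T$, which you silently correct.
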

\begin{proof}
 (i) is an immediate  consequence of \cite[Lemma 6.9]{Mae2014}.
 
 (ii) The closure $T=\mathring T^{**}$ is provided by means of the biadjoint of $\mathring T$  in \cite[Theorem 3.1 (3)]{Mae2014}.
 
 (iii) Let $x\in \im T$ be given. From $A(Dx)'=Tx-Bx$ we obtain  $A^{-}A(Dx)'=A^{-}Tx-A^{-}Bx$. Owing to the proper factorization we may deduce the representation $(Dx)'=A^{-}Tx+ ((A^{-}A)'D-A^{-}B)x$. This yields
 \begin{align*}
  \lVert (Dx)'\rVert_{L^{2}}&\leq c_{1}(\lVert x\rVert_{L^{2}}+\lVert Tx\rVert_{L^{2}}),\\
  \lVert x\rVert_{H^{1}_{D}}^{2}&=\lVert x\rVert_{L^{2}}^{2}+\lVert (Dx)'\rVert_{L^{2}}^{2}\leq \lVert x\rVert_{L^{2}}^{2}+ c_{1}^{2} (\lVert x\rVert_{L^{2}}+\lVert Tx\rVert_{L^{2}})^2 \leq (1+c_{1}^{2})(\lVert x\rVert_{L^{2}}+\lVert Tx\rVert_{L^{2}})^2 .
 \end{align*}
On the other hand we have simply $\lVert x\rVert_{T}=\lVert x\rVert_{L^{2}}+\lVert Tx\rVert_{L^{2}}\leq (1+\lVert T\rVert)\lVert x\rVert_{H^{1}_{D}}$. 
\end{proof}
We emphasize that 
$\dom T$ and $T$ remain invariant under proper refactorizations $AD=\bar A\bar D$ since the closure of a densely defined closable operator is unique.\footnote{Note that this is at the same time consistent with \cite[Theorem 2.21]{CRR}.}
\subsection{Regular DA operators}\label{subs.Regularity}

We adopt the regularity notion in \cite[Sections 2.4.2 and 2.7]{CRR}. Regularity of a linear DAE is solely a matter of its coefficients $E, F$ and $A,D,B$, respectively.
The associated coefficient pair $(E,F)$ is regular on $[a,b]$ 
exactly if one (equivalently each) associated proper triple $(A,D,B)$ resulting from a proper factorization is regular on $[a,b]$. This is in full  accordance with the fact that the closure $T$ does not depend on the special factorization. 
 \begin{definition}\label{d.regularity}
 The DA operators $\mathring T$ and $T$ are regular with tractability index $\mu\in \N$ and characteristic values
 $0<r_{0}\leq\cdots\leq r_{\mu-1} < r_{\mu}=m$,
 if the associated DAE is regular on $[a,b]$ with these characteristics.
 
 A regular DA operator is said to be fine, if the coefficients $A,D,B$ are smooth enough for the existence of completely decoupling projectors. 
 \end{definition}
 Definition \ref{d.regularity} is consistent with \cite[Definition 2.1]{HMT}. 
 The operator $\mathring T$ and its closure $T$ share their index and characteristic values. Recall that 
$l:=m-\sum^{\mu-1}_{j=0}(m-r_j)\geq0$ is the dynamical degree of freedom of the DAE. 
$l=0$ may happen, see, e.g., the example DAE in Section~\ref{s.Example}.

A regular on $[a,b]$ DAE is associated with the so-called \textit{canonical projector function}
\begin{align*}
 \pPi_{can}\in \mathcal C([a,b],\mathcal L( \R^{m},\R^{m})),\quad \pPi_{can}(t)^{2}=\pPi_{can}(t),\;\rank\pPi_{can}(t)=l,\;  t\in[a,b],
\end{align*}
 which can be seen as generalization of the spectral projection for regular matrix pencils. Each IVP for the matrix-valued function $X$,
 \begin{align*}
  A(DX)'+BX=0,\quad X(c)=\pPi_{can}(c),\quad c\in[a,b],
 \end{align*}
is uniquely solvable. We refer to \cite[Section 2.4]{CRR}, \cite[Section 2.2]{LMW},\cite[Section 4.2]{Mae2014} for details, but note that, owing to the continuity of the coefficients, the solution of the above IVP, $X(\cdot,c)$ is well-defined and continuous with continuously differentiable part $DX(\cdot,c)$. $X(t,c)$ is called \textit{maximal-size fundamental solution matrix normalized at} $c$. It has constant rank $l<m$, and it may happen that $l=0$, see Section \ref{s.Example}. This is in contrast to the case of regular ODEs.
 
\medskip

In the following we focus on bounded DA operators given in their natural Hilbert spaces,
\begin{align*}
 T:H^{1}_{D}(\mathcal I,\R^{m})\rightarrow L^{2}(\mathcal I,\R^{m}),\quad
Tx:=A(Dx)'+Bx,\; x\in H^{1}_{D}(\mathcal I,\R^{m}),\quad \mathcal I:=(a,b).
 \end{align*}
If $T$ is regular, then
\begin{align*}
 \dim\ker T=l.
\end{align*}
Introduce the  operator $T_{IC}$ to capture initial conditions as well as the composed operator $\mathcal T$ by
\begin{align*}
T_{IC}:H^{1}_{D}(\mathcal I,\R^{m})\rightarrow \R^{l},\quad T_{IC}x=G_{a}x(a), \, x\in H^{1}_{D}(\mathcal I,\R^{m}),\\
 \mathcal T:H^{1}_{D}(\mathcal I,\R^{m})\rightarrow L^{2}(\mathcal I,\R^{m})\times\R^{l},\quad
 \mathcal T x=(Tx,T_{IC}\,x), \; x\in H^{1}_{D}(\mathcal I,\R^{m}).
\end{align*}
Thereby we suppose that $G_{a}\in\mathcal L(\R^{m},\R^{l})$ and $\ker D(a)\subseteq \ker G_{a}$. The latter condition ensures the relation $G_{a}=G_{a}D(a)^{+}D(a)$. Then owing to the continuous embedding $H^{1}\hookrightarrow \mathcal C$ the operator $T_{IC}$ is well-defined and bounded.
\medskip

Next we adopt the notion of accurately stated initial condition \cite[Definition 2.3]{LMW} accordingly.
\begin{definition}\label{d.IC}
The operator $T_{IC}$ is accurately stated if $\im \mathcal T=\im T\times \R^{l}$ and the composed operator $\mathcal T$ is injective.
\end{definition}
Owing to \cite[Corollary 2.2]{LMW},
if $T$ is regular, then $T_{IC}$ is accurately stated exactly if $\ker G_{a}\cap \im \pPi_{can}(a)=\{0\}$.
\medskip

Supposing $T$ to be fine and $T_{IV}$ to be accurately stated, the inverse of the composed operator $\mathcal T$ can be represented as (e.g.,\cite[Section 2.2]{LMW}, \cite[Section 2.6]{CRR}).

\begin{align}
 \mathcal T^{-1}(g,d)(t)&=\underbrace{X(t,a)G_{a}^{-}d+ (Vg)(t)}_{\in\; \im \pPi_{can}(t)} + \underbrace{(\mathfrak D g)(t)}_{\in\;\ker \pPi_{can}(t)},\quad (g,d)\in \im T\times\R^{l}, \label{DAE}\\
 &(Vg)(t)=\int_{a}^{t}X(t,s)G_{\mu}(s)^{-1}g(s){\rm ds},\label{DAEV}\\
 &\mathfrak Dg=v_{0}+v_{1}+\cdots+v_{\mu-1},\label{DAEN}
\end{align}
in which the functions $v_{i}$ are successively explicitly determined by simple multiplications with certain matrix coefficients, by differentiation of $Dv_j$, $j=i+1,\ldots,\mu-1$, and subsequent linear combinations,
\begin{align*}
 v_{\mu-1}&=\mathcal L_{\mu-1}g,\\
 v_{\mu-2}&=\mathcal L_{\mu-2}g -\mathcal N_{\mu-2,\,\mu-1}(Dv_{\mu-1})',\\
 \vdots\\
 v_{1}&=\mathcal L_{1}g -\sum_{s=2}^{\mu-1}\mathcal N_{1,\,s}(Dv_{s})'-\sum_{s=3}^{\mu-1}\mathcal M_{1,\,s}v_{s},\\
 v_{0}&=\mathcal L_{0}g -\sum_{s=1}^{\mu-1}\mathcal N_{0,\,s}(Dv_{s})'-\sum_{s=2}^{\mu-1}\mathcal M_{0,\,s}v_{s}.
\end{align*}
The coefficients $\mathcal N_{i,k},\mathcal M_{i,k},\mathcal L_{i}$, and $G_{\mu}$ are at least continuous. They are fully determined by $A,D,B$ via a sequence of admissible matrix functions corresponding to a complete decoupling.
\medskip

With the representation (\ref{DAE}) of the inverse $\mathcal T^{-1}$ we intend to emphasize, on the one hand, the partial resemblance to (\ref{ODE}). On the other hand, the second term  $\mathfrak Dg$ emerge for DA operators only. It is a differential operator, with may be higher order, see Section \ref{s.Example}. If $T$ has index $\mu>1$ then $\mathcal T^{-1}$ includes derivatives up to order $\mu-1$.

The representation (\ref{DAE})
 shows that the inverse of the composed operator actually decomposes into two parts. The first ``good'' part is close to (\ref{ODE}), i.e., the case of regular ODEs. This part may disappear. The second part is always present and representative for DA operators. Unfortunately, the canonical projector function which separates the parts is practically available in a few special cases only.

 We close this subsection by quoting further relevant results from \cite[Section 2]{HMT}
\begin{theorem}\label{t.images}
Let $T$ be fine with index $\mu\in \N$ and $T_{IC}$ be accurately stated.
\begin{enumerate}[(i)]
 \item If  $\mu=1$, then $\im T=L^{2}(\mathcal I,\R^{m})$, and the composed operator $\mathcal T$ is a homeomorphism.
 \item If $\mu>1$, then $\im T\subset L^{2}(\mathcal I,\R^{m}) $ is nonclosed and the inverse 
 $\mathcal T^{-1}$ as well as the Moore-Penrose inverse $T^{+}$ are no longer continuous. 
  Then the equation $\mathcal Tx=(g,d)$ is essentially ill-posed in Tikhonov's sense.
 \end{enumerate}
\end{theorem}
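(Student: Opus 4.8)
The plan is to read everything off the explicit representation \eqref{DAE}--\eqref{DAEN} of $\mathcal T^{-1}$ together with the identity $\im\mathcal T=\im T\times\R^{l}$ granted by the accurately stated $T_{IC}$ (Definition~\ref{d.IC}). The decisive structural observation is that $\mathcal T^{-1}$ splits into the Volterra-plus-initial part $X(\cdot,a)G_{a}^{-}d+Vg$, which is a bounded map $L^{2}\times\R^{l}\to H^{1}_{D}$ in view of \eqref{DAEV} and the continuity of $X(\cdot,a)$ with $DX(\cdot,a)\in H^{1}$, and the part $\mathfrak Dg$ from \eqref{DAEN}, which is a differential operator of order $\mu-1$. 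Thus the whole analysis reduces to controlling $\mathfrak D$, and the dichotomy between (i) and (ii) is exactly the dichotomy between $\mathfrak D$ carrying no derivative and $\mathfrak D$ carrying at least one.

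For (i), when $\mu=1$ the two sums in the recursion for $v_{1},\dots,v_{\mu-1}$ are empty, so $\mathfrak Dg=v_{0}=\mathcal L_{0}g$ is mere multiplication by a continuous matrix function. I would define $S(g,d)$ as the right-hand side of \eqref{DAE}, check that $S:L^{2}\times\R^{l}\to H^{1}_{D}$ is bounded (each summand is, the algebraic term $\mathcal L_{0}g$ because it involves no differentiation of $g$; indeed $D\mathcal L_{0}g=0$), and verify $\mathcal T S=\mathrm{id}$ on all of $L^{2}\times\R^{l}$ — this is the same decoupling computation that produced \eqref{DAE}, now valid for every $g\in L^{2}$ precisely because no derivative of $g$ is required. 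Injectivity of $\mathcal T$ then gives $S\mathcal T=\mathrm{id}$, so $\mathcal T$ is a bounded bijection with bounded inverse $S=\mathcal T^{-1}$, i.e. a homeomorphism; and since $\im\mathcal T=\im T\times\R^{l}$ now equals all of $L^{2}\times\R^{l}$, we obtain $\im T=L^{2}$.

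For (ii) I would argue by contradiction using the bounded inverse theorem. If $\im T$ were closed in $L^{2}$, then $\im\mathcal T=\im T\times\R^{l}$ would be a closed subspace, hence a Banach space, and $\mathcal T:H^{1}_{D}\to\im\mathcal T$ a bounded bijection between Banach spaces, forcing $\mathcal T^{-1}$ to be bounded. To contradict this I would construct an unbounding sequence: take smooth oscillatory data such as $g_{n}(t)=\tfrac1n\varphi(t)\sin(n^{2}t)\,\xi$ with $\varphi\in\mathcal C_{0}^{\infty}(\mathcal I)$, a fixed direction $\xi\in\R^{m}$, and $d_{n}=0$. Being smooth, $g_{n}\in\im T$ (the element $S(g_{n},0)$ lies in $H^{1}_{D}$ and maps back under $\mathcal T$), while $\lVert g_{n}\rVert_{L^{2}}\to0$. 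Because $\mu\ge2$, the recursion feeds a genuine first derivative of $g$ into $v_{\mu-2}$ through $-\mathcal N_{\mu-2,\mu-1}(D\mathcal L_{\mu-1}g)'$ and, by iteration, a derivative of order $\mu-1$ into $v_{0}$; evaluated on $g_{n}$ the highest-order term has $L^{2}$-norm of order $n^{2\mu-3}\to\infty$ and dominates the others, whereas the Volterra part $Vg_{n}\to0$. Hence $\lVert\mathcal T^{-1}(g_{n},0)\rVert_{H^{1}_{D}}\to\infty$, the desired contradiction. The main obstacle is exactly this last point: one must guarantee that the top-order differentiation is not annihilated, i.e. that $\xi$ can be chosen so that the relevant product of the coefficient functions $\mathcal N_{i,k},\mathcal L_{j},D$ does not vanish identically. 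This is where regularity with index \emph{exactly} $\mu>1$ enters, through the strict inequality $r_{\mu-1}<r_{\mu}=m$ of Definition~\ref{d.regularity}: it is precisely the statement that $\mu-1$ differentiations genuinely occur in the complete decoupling, so the leading coefficient of $\mathfrak D$ is not identically zero.

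Finally I would collect the consequences. The sequence already exhibits $\mathcal T^{-1}$ as discontinuous. For the Moore--Penrose inverse I would invoke the standard characterization that $T^{+}$ is bounded if and only if $\im T$ is closed; having shown $\im T$ nonclosed, $T^{+}$ is discontinuous as well. The nonclosedness of $\im T$, together with its density in $L^{2}$ (it contains every smooth function, by the same argument that placed $g_{n}$ in $\im T$), is exactly the situation of an \emph{essentially ill-posed} problem in Tikhonov's sense — nonclosed range, Nashed's ill-posedness of the second kind — which yields the last assertion.
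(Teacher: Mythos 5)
First, a point of comparison: the paper does not prove Theorem~\ref{t.images} at all --- it is explicitly ``quoted from \cite[Section 2]{HMT}'', so there is no in-paper argument to measure you against. Judged on its own terms, your skeleton is sound and matches what one would expect the cited proof to do: part (i) via the representation \eqref{DAE} with $\mathfrak D=\mathcal L_{0}$ reduced to a multiplication operator (and your observation $D\mathcal L_{0}g=0$ is the right reason the formula lands in $H^{1}_{D}$ for every $g\in L^{2}$); part (ii) via the standard equivalences ``injective bounded operator has bounded inverse iff its range is closed'' and ``$T^{+}$ is bounded iff $\im T$ is closed'', plus density of $\im T$ for the Tikhonov/Nashed classification.

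The one genuine gap is exactly the point you flag and then wave at: that for $\mu>1$ a derivative of order $\mu-1$ of $g$ \emph{survives with a not-identically-vanishing coefficient} into $v_{0}$, equivalently that the leading coefficient of $\mathfrak D$ is nonzero somewhere. Your justification --- ``$r_{\mu-1}<r_{\mu}=m$ is precisely the statement that $\mu-1$ differentiations genuinely occur'' --- is not a proof and conflates two different things: the singularity of $G_{\mu-1}$ (which is what $r_{\mu-1}<m$ says, i.e.\ the matrix chain needs $\mu$ steps) versus the non-vanishing of the composite products of $\mathcal N_{i,k}$, $\mathcal L_{j}$ and $D$ that carry $(D v_{\mu-1})^{(\,\cdot\,)}$ all the way down the recursion into $v_{0}$. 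That the tractability index $\mu$ coincides with the order of derivatives actually entering the solution is a genuine theorem of the decoupling theory (it is in substance the equality of tractability and perturbation indices), not a restatement of the definition, and your oscillatory-sequence construction stands or falls with it: if the relevant coefficient were annihilated, $\lVert\mathcal T^{-1}(g_{n},0)\rVert$ need not blow up. A cleaner route that still needs the same fact but uses it only once is: $\im T$ contains all smooth admissible $g$, hence is dense; it is proper because membership forces certain linear combinations of components of $g$ to possess $\mu-1$ weak derivatives (this is where the non-vanishing is used); a dense proper subspace is not closed, and everything else follows as you say. Either way, you should cite or prove the structural non-degeneracy rather than derive it from $r_{\mu-1}<m$. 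A secondary, fixable point: in your sequence argument ``the highest-order term dominates'' additionally requires the leading coefficient to be bounded away from zero on the support of $\varphi$, which continuity gives you once non-vanishing at a single point is secured.
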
 
\section{Justification of the overdetermind polynomial collocation}\label{s.OPC}
In this section we deal with regular higher-index DA operators $T$, the related composed operator $\mathcal T$ and their approximations $R_{\pi,M}TU_{\pi}$ and $\mathcal R_{\pi,M}\mathcal TU_{\pi}$. On the background of the corresponding properties we provide new sufficient convergence conditions for the overdetermined polynomial collocation.
\subsection{Basic technicalities}\label{subs.Basic}

We consider the linear IVP or BVP,
\begin{align}
 A(t)(Dx)'(t)+B(t)x(t)&=g(t),\quad t\in [a,b],\label{linDAE}\\  
  G_{a}x(a)+G_{b}x(b)&=d,\label{linBC}
\end{align}
with the constant matrix $D=[I\; 0]\in \mathcal L(\R^{m},\R^{k})$, $\rank D=k$, and at least continuous matrix coefficients
$A:[a,b]\rightarrow \mathcal L(\R^{k},\R^{m}),\; B:[a,b]\rightarrow \mathcal L(\R^{m},\R^{m})$.
The   DAE (\ref{linDAE}) is supposed to be fine in the sense of \cite[Section 2.6]{CRR} , with tractability index $\mu\in \N$ and dynamical degree of freedom $l\leq k$. Recall that $\mu>1$ necessarily implies $l<k$. The matrices $G_{a}, G_{b}\in \mathcal L(\R^{m},\R^{l})$ are supposed to satisfy the conditions
\begin{align}\label{G}
 \ker D\subseteq \ker G_{a},\quad \ker D\subseteq \ker G_{b}.
\end{align}
Condition (\ref{linBC}) is further supposed to be accurately stated  in the sense of \cite[Definition 2.3]{LMW}), so that the problems 
\begin{align}\label{homlinDAE}
 A(t)(Dx)'(t)+B(t)x(t)=0,\; t\in [a,b],\quad
  G_{a}x(a)+G_{b}x(b)=d,
\end{align}
are uniquely solvable for each $d\in \R^{l}$. In particular, the homogeneous linear BVP has the trivial solution only. 

The function $g$ is assumed to be admissible, so that the DAE \eqref{linDAE} is solvable. Then
the BVP \eqref{linDAE}, \eqref{linBC} has exactly one solution  $x_{*}$ to be approximated later on.

\medskip
Following the ideas of \cite{HMTWW,HMT} we represent the BVP (\ref{linDAE}), (\ref{linBC}) as operator equation $\mathcal Tx=y:=(g,d)$ in Hilbert spaces by introducing the spaces
\begin{align*}
 L^{2}:=L^{2}((a,b),\R^{m}),\quad H^{1}_{D}:=H^{1}_{D}((a,b),\R^{m}):=\{x\in L^{2}:Dx\in H^{1}((a,b),\R^{k})\},
\end{align*}
equipped with the inner products
\begin{align*}
 (x,\bar x)_{L^{2}}=\int^{b}_{a}\langle x(t),\bar x(t)\rangle {\rm dt},\; x,\bar x\in L^{2},\quad (x,\bar x)_{H^{1}_{D}}=(x,\bar x)_{L^{2}}+((Dx)',(D\bar x)')_{L^{2}((a,b),\R^{k})},\; x,\bar x\in H^{1}_{D}.
\end{align*}

and operators
\begin{align*}
 T&:H^{1}_{D}\rightarrow L^{2},\quad Tx=A(Dx)'+Bx,\; x\in H^{1}_{D},\\
 T_{BC}&:H^{1}_{D}\rightarrow \R^{l},
 \quad T_{BC} x=G_{a}x(a)+G_{b}x(b),\; x\in H^{1}_{D},\\
 \mathcal T&:H^{1}_{D}\rightarrow L^{2}\times\R^{l}=:Y,\quad \mathcal Tx=(Tx,T_{BC}x),\; x\in H^{1}_{D}.
\end{align*}
The operator $T_{BC}$ is well defined and bounded owing to condition (\ref{G}) and the continuous embedding $H^{1}((a,b),\R^{k})\hookrightarrow \mathcal C([a,b],\R^{k})$. Then, the DA operator $T$ as well as the composed operator $\mathcal T$ are obviously bounded. Moreover, $\mathcal T$ is injective and $\im \mathcal T=\im T\times \R^{l}$. At this place let us emphasize again that we focus our interest on higher-index DAEs, $\mu\geq 2$, but then $\im T$ is a nonclosed subset of $L^{2}$ and $\mathcal T^{-1}$ is an unbounded operator, cf. Section \ref{s.Order1}, also \cite{HMTWW,HMT,Mae2014}.
\bigskip

Given the partition
\begin{align}\label{partition}
 \pi: a=t_{0}<t_{1}<\cdots<t_{n}=b,
\end{align}
with stepsizes $h_{j}=t_{j}-t_{j-1}$, minimal stepsize $h_{\pi,\,min}$, and maximal stepsize $h_{\pi}$, we denote 
by $\mathcal C_{\pi}=\mathcal C_{\pi}([a,b],\R^{m})$ the space of piecewise continuous functions having breakpoints merely at the mesh points. Note that the supremum-norm $\lVert\cdot\rVert_{\infty}$ is well-defined for the elements of $\mathcal C_{\pi}$.

Next we fix a number $N\geq 1$ and introduce the space of ansatz functions to approximate $x_{*}$ by piecewise polynomial functions,
\begin{align}\label{ansatz}
 X_{\pi}=\{&x\in \mathcal C_{\pi}([a,b],\R^{m}):Dx\in \mathcal C([a,b],\R^{k}),\nonumber\\
 &x_{\kappa}\lvert_{[t_{j-1},t_{j})}\in \mathfrak P_{N},\, \kappa=1,\ldots,k,\quad
 x_{\kappa}\lvert_{[t_{j-1},t_{j})}\in \mathfrak P_{N-1},\, \kappa=k+1,\ldots,m,\; j=1,\ldots,n\}.
\end{align}
The finite-dimensional space $X_{\pi}$ is a closed subspace of $H^{1}_{D}$, and the latter decomposes into the topological sum $X_{\pi}\oplus X_{\pi}^{\bot}=H^{1}_{D}$. We agree upon that
\begin{align*}
 U_{\pi}:H^{1}_{D}\rightarrow H^{1}_{D} \quad\text{denotes the orthoprojection operator of }\; H^{1}_{D}\quad \text{onto }\; X_{\pi}.
\end{align*}
For later reference, the following norm in $X_\pi$ will be needed:
\begin{equation*}
 \lVert x \rVert_{C_D^1} = \lVert x \rVert_\infty + \lVert (Dx)' \rVert_\infty,\quad x\in X_\pi.
\end{equation*}
The ansatz space $X_{\pi}$ has dimension $nNm+k$. 
Choosing values 
\begin{align*}
 0<\tau_{1}<\cdots<\tau_{M}<1
\end{align*}
we specify $M\geq N+1$ collocation points per subinterval, i.e.,
\begin{align*}
 t_{ji}=t_{j-1}+\tau_{i}h_{j},\quad i=1,\ldots,M,\;j=1,\ldots,n,
\end{align*}
and are then confronted with the overdetermined collocation system of $nMm+l>nNm+k$ equations for providing an approximation $x\in X_{\pi}$, namely,
\begin{align}
 A(t_{ji})(Dx)'(t_{ji})+B(t_{ji}) x(t_{ji})-g(t_{ji})&=0,\quad i=1,\ldots,M,\;j=1,\ldots,n,\label{collDAE}\\
 G_{a}x(t_{0})+G_{b}x(t_{n})-d &=0.\label{collBC}
\end{align}
As a matter of course, the choice $M>N$ goes along with an overdetermined system (\ref{collDAE}),(\ref{collBC}) comprising more equations than unknowns. This is different from standard collocation methods for ODEs and index-1 DAEs, e.g.,~\cite{LMW}. Here we treat the overdetermined collocation system in a least-squares sense. 
\medskip
 
Let $R_{\pi,M}:\mathcal C_{\pi}([a,b],\R^{m})\rightarrow \mathcal C_{\pi}([a,b],\R^{m})$ denote the restriction operator which assigns to $w\in \mathcal C_{\pi}([a,b],\R^{m})$  the piecewise polynomial $R_{\pi,M}w\in \mathcal C_{\pi}([a,b],\R^{m})$ of degree less than or equal to $M-1$ such that the interpolation conditions,
\begin{align*}
 (R_{\pi,M}w)(t_{j i})=w(t_{j i}),\quad i=1,\cdots,M,\; j=1,\cdots, n,
\end{align*}
are satisfied.
We also assign to $w\in \mathcal C_{\pi}([a,b],\R^{m})$ the vector $W\in \R^{mMn}$,
\begin{align*}
 W=\begin{bmatrix}
    W_{1}\\\vdots\\W_{n}
   \end{bmatrix}\in \R^{mMn},\quad W_{j}=\left(\frac{h_{j}}{M}\right)^{\frac{1}{2}}
   \begin{bmatrix}
    w(t_{j 1})\\\vdots\\w(t_{j M})
   \end{bmatrix}\in \R^{mM},
\end{align*}
which yields (cf.~\cite[Subsection 2.3]{HMTWW})
\begin{align}\label{disc.norm}
 \rVert R_{\pi,M}w\lVert_{L^{2}}^{2}= W^{T}\mathcal L W, \quad w\in \mathcal C_{\pi}([a,b],\R^{m}),
\end{align}
with the matrix $\mathcal L$ being positive definite, symmetric and independent\footnote{The entries of $\mathcal L$ are fully determined by the corresponding $M$ Lagrangian basis polynomials, thus, by $M$ and $\tau_{1},\ldots, \tau_{M}$.}  of $h_{\pi}$. There are further constants $\kappa_{l},\kappa_{u}>0$ such that
\begin{align}\label{norm.equ}
 \kappa_{l}\;\lvert W\rvert^{2}\leq W^{T}\mathcal L W\leq \kappa_{u}\;\lvert W\rvert^{2},\quad W\in \R^{mMn}.
\end{align}
If $w\in \mathcal C_{\pi}([a,b],\R^{m})$ is of class $\mathcal C^{M}$ on each subinterval  of the partition $\pi$, then 
\begin{align}\label{rest}
 \lvert w(t)-(R_{\pi,M}w)(t)\rvert\leq\frac{1}{M!}\lVert w^{(M)}\rVert_{\infty}\; h_{\pi}^{M},\quad t\in[a,b].
\end{align}
Additionally, we introduce the restriction operator  $\mathcal R_{\pi,M}:\mathcal C_{\pi}([a,b],\R^{m})\times\R^{l}\rightarrow \mathcal C_{\pi}([a,b],\R^{m})\times\R^{l}$ by 
\[
 \mathcal R_{\pi,M}y =(R_{\pi,M}g, d),\quad y=(g,d)\in \mathcal C_{\pi}([a,b],\R^{m})\times\R^{l}.
\]
The overdetermined least-squares collocation means now that we seek an element $\tilde x_{\pi}\in X_{\pi}$
minimizing the functional 
\begin{align}
\psi_{\pi,M}(x)=\;\lVert \mathcal R_{\pi,M}(\mathcal Tx-y)\rVert^{2}_{L^{2}\times\R^{l}}= \lVert R_{\pi,M}(Tx-g)\rVert^{2}_{L^{2}}+\lvert T_{BC}x-d\rvert^{2},\quad x\in X_{\pi}\label{lscRF}.
\end{align}
With $w=Tx-g$ we may represent
\begin{align}
\psi_{\pi,M}(x)&=\;W^{T}\mathcal L \,W+\,\rvert T_{BC}x-d \lvert^{2},\quad x\in X_{\pi}, \label{lsc}
\end{align}
which reveals that by minimizing  $\psi_{\pi,M}(x)$ subject to $x\in X_{\pi}$ we actually provide a least-squares solution of the collocation system (\ref{collDAE}),(\ref{collBC}).
The mathematics behind is closely related to special properties of the restriction operator $R_{\pi,M}$ on the one hand, but on the other hand, to the  problem to minimize the functional
\begin{align}
 \psi(x)=\;\lVert \mathcal Tx-y\rVert^{2}_{L^{2}\times\R^{l}}=\;\rVert Tx-g\lVert^{2}_{L^{2}}+\rvert T_{BC}x-d\lvert^{2},\quad  x\in X_{\pi},\label{lscF}
\end{align}
for which (\ref{lscRF}) serves as approximation.

\subsection{The operators $R_{\pi,M} T U_{\pi}$ and $\mathcal R_{\pi,M}\mathcal T U_{\pi}$}
We begin this section by providing  useful norm inequalities. Regarding convergence properties for $h_{\pi}$ tending to zero we have in mind sequences of partitions. In favor for an easier reading we drop an extra labeling, but we thoroughly assure that the indicated constants  do not depend  on the partitions and stepsizes $h_{\pi}$ in fact. We allow partitions $\pi$ having quotients $h_{\pi}/h_{\pi,min}\leq r$, with a global bound $1\leq r<\infty$. A consequence of \cite[Theorem 3.2.6]{Ciarlet02} is that there exists a constant $c_K=c_K(r)$
such that 
\begin{equation}\label{invineq}
 \lVert z\rVert_{\infty}\leq c_{K} h_{\pi}^{-\frac{1}{2}}\;\lVert z\rVert_{L^{2}}
\end{equation}
for all functions $z\in \mathcal C_{\pi}([a,b],\R^{s})$ being a polynomial of degree less than or equal to $K$ on each subinterval of the partition $\pi$.

\begin{lemma}\label{l.norm}
There is a constant $\kappa>0$ such that
\begin{align*}
 \kappa h_{\pi}\, \lVert x\rVert^{2}_{\mathcal C^{1}_{D}}\leq \lVert x\rVert^{2}_{H^{1}_{D}}\leq (b-a)\,\lVert x\rVert^{2}_{\mathcal C^{1}_{D}},\quad x\in X_{\pi}.
\end{align*}
\end{lemma}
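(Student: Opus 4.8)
The plan is to prove the two inequalities separately, since they rely on quite different mechanisms: the right-hand bound is elementary, while the left-hand bound is exactly where the inverse inequality (\ref{invineq}) enters.

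For the upper (right-hand) bound I would start from the definition $\lVert x\rVert^{2}_{H^{1}_{D}}=\lVert x\rVert^{2}_{L^{2}}+\lVert (Dx)'\rVert^{2}_{L^{2}}$ and estimate each $L^{2}$-norm over $[a,b]$ by the corresponding supremum norm, using $\lVert x\rVert^{2}_{L^{2}}\leq (b-a)\lVert x\rVert^{2}_{\infty}$ and $\lVert (Dx)'\rVert^{2}_{L^{2}}\leq (b-a)\lVert (Dx)'\rVert^{2}_{\infty}$. Adding these and observing that $\lVert x\rVert^{2}_{\infty}+\lVert (Dx)'\rVert^{2}_{\infty}\leq (\lVert x\rVert_{\infty}+\lVert (Dx)'\rVert_{\infty})^{2}=\lVert x\rVert^{2}_{\mathcal C^{1}_{D}}$ yields the constant $(b-a)$ with no dependence on the mesh.

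For the lower (left-hand) bound the key tool is the inverse inequality (\ref{invineq}). Since every $x\in X_{\pi}$ is, on each subinterval, a polynomial of degree at most $N$, applying (\ref{invineq}) with $K=N$ gives $\lVert x\rVert_{\infty}\leq c_{K}h_{\pi}^{-1/2}\lVert x\rVert_{L^{2}}$. The block $Dx$ is continuous and piecewise polynomial of degree at most $N$, so its derivative $(Dx)'$ belongs to $\mathcal C_{\pi}([a,b],\R^{k})$ and is piecewise polynomial of degree at most $N-1$; applying (\ref{invineq}) with $K=N-1$ gives $\lVert (Dx)'\rVert_{\infty}\leq c_{K}h_{\pi}^{-1/2}\lVert (Dx)'\rVert_{L^{2}}$. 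Adding the two estimates, taking $c_{K}$ to be the larger of the two constants attached to degrees $N$ and $N-1$, and using $(\alpha+\beta)^{2}\leq 2(\alpha^{2}+\beta^{2})$, I obtain $\lVert x\rVert^{2}_{\mathcal C^{1}_{D}}\leq 2c_{K}^{2}h_{\pi}^{-1}\lVert x\rVert^{2}_{H^{1}_{D}}$, which after rearranging gives the claimed bound with $\kappa=1/(2c_{K}^{2})$.

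The point requiring care here is not a deep obstacle but rather bookkeeping of two kinds. First, one must check that $(Dx)'$ is a legitimate argument for (\ref{invineq}), i.e.\ that it is piecewise polynomial of degree at most $N-1$ and lies in $\mathcal C_{\pi}([a,b],\R^{k})$; this is immediate from the definition (\ref{ansatz}) of $X_{\pi}$. Second, and more importantly for the intended application to sequences of partitions, one must confirm that $\kappa$ is independent of $\pi$. This follows because, as recorded after (\ref{invineq}), the constant $c_{K}=c_{K}(r)$ depends only on the fixed polynomial degree $N$ and on the global mesh-ratio bound $r$, and not on $h_{\pi}$ itself, so $\kappa=1/(2c_{K}^{2})$ is uniform along any admissible sequence of partitions.
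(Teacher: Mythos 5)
Your proposal is correct and follows essentially the same route as the paper: the upper bound by crude $L^{2}$-versus-supremum estimation over $[a,b]$, and the lower bound by applying the inverse inequality (\ref{invineq}) separately to $x$ (degree $\leq N$) and to $(Dx)'$ (degree $\leq N-1$), combined via $(\alpha+\beta)^{2}\leq 2(\alpha^{2}+\beta^{2})$; the paper keeps the two constants $c_{N},c_{N-1}$ separate, arriving at $\kappa^{-1}=2(c_{N}^{2}+c_{N-1}^{2})$, where you take their maximum, which is an immaterial difference. Your closing remarks on the admissibility of $(Dx)'$ as an argument of (\ref{invineq}) and on the uniformity of $\kappa$ over partitions with bounded mesh ratio are exactly the right points of care.
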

\begin{proof}
The inequality $\lVert x\rVert_{H^{1}_{D}}^{2}\leq (b-a) \lVert x\rVert_{\mathcal C^{1}_{D}}^{2}$ is evident for all $x\in X_{\pi}$. On the other hand, owing to \eqref{invineq} each arbitrary $x\in X_{\pi}$ satisfies
\begin{align*}
 \lVert x\rVert_{\infty}^{2}\leq c_{N}^{2} h_{\pi}^{-1} \lVert x\rVert_{L^{2}}^{2},\quad \lVert (Dx)'\rVert_{\infty}^{2}\leq c_{N-1}^{2} h_{\pi}^{-1} \lVert (Dx)'\rVert_{L^{2}}^{2},
\end{align*}
and finally
\begin{align*}
 \lVert x\rVert_{\mathcal C^{1}_{D}}^{2}\leq 2\lVert x\rVert_{\infty}^{2}+2\lVert (Dx)'\rVert_{\infty}^{2}\leq 2 (c_{N}^{2}+c_{N-1}^{2}) h_{\pi}^{-1} \lVert x\rVert^{2}_{H_{D}^{1}}=: \frac{1}{\kappa}h_{\pi}^{-1} \lVert x\rVert^{2}_{H_{D}^{1}}.
\end{align*}
\end{proof}
If $A$ and $B$ are constant matrices, then $Tx$ is piecewise polynomial with degree less than or equal to $N$ for $x\in X_{\pi}$. Owing to $M\geq N+1$ this leads to
\begin{align}\label{RTU1}
 R_{\pi,M}TU_{\pi}x=TU_{\pi}x,\quad \mathcal R_{\pi,M}\mathcal TU_{\pi}x=\mathcal TU_{\pi}x,\quad x\in H^{1}_{D}.
\end{align}
In general, the operators $R_{\pi,M}TU_{\pi}$ and thus $\mathcal R_{\pi,M}\mathcal TU_{\pi}$ are well-defined on $H^{1}_{D}$ since $TU_{\pi}x$ belongs to $\mathcal C_{\pi}$ for all $x\in H^{1}_{D}$.
\begin{proposition}\label{p.RTU}
Let the DA operator $T$ be fine with index $\mu\in \N$ and $T_{BC}$ be accurately stated. Let $M\geq N+1$ and let the entries of $A$ and $B$ be of class $\mathcal C^{M}$. Then the following assertions are valid for all sufficiently fine partitions $\pi$:
\begin{enumerate}[(i)]
 \item 
 There is a constant $C_{AB1}$ such that
 \begin{align}
  \lVert R_{\pi,M}TU_{\pi}x\rVert_{L^{2}}&\leq C_{AB1}\,\lVert x\rVert_{H^{1}_{D}},\quad x\in H^{1}_{D},\label{inequ.1}\\
  \lVert \mathcal R_{\pi,M}\mathcal TU_{\pi}x\rVert_{L^{2}\times\R^{l}}&\leq (C_{AB1}^{2}+\lVert T_{BC}\rVert^{2})^{\frac{1}{2}}\,\lVert x\rVert_{H^{1}_{D}},\quad x\in H^{1}_{D}.\nonumber
 \end{align}
 \item
 There is a constant $C_{AB2}$ such that
 \begin{align*}
  \lVert R_{\pi,M}TU_{\pi}x-TU_{\pi}x\rVert_{L^{2}}&\leq C_{AB2}\,h_{\pi}^{M-N-\frac{1}{2}}\,\lVert x\rVert_{H^{1}_{D}},\quad x\in H^{1}_{D},\\
  \lVert \mathcal R_{\pi,M}\mathcal TU_{\pi}x-\mathcal TU_{\pi}x\rVert_{L^{2}\times\R^{l}}&\leq  C_{AB2}\,h_{\pi}^{M-N-\frac{1}{2}}\,\lVert x\rVert_{H^{1}_{D}},\quad x\in H^{1}_{D}.
 \end{align*}
 \item If additionally $M\geq N+\mu$, then there is a constant $C$ such that
 \begin{align}\label{RTU2}
  \ker  \mathcal R_{\pi,M}\mathcal TU_{\pi}=\ker \mathcal TU_{\pi}=\ker U_{\pi},\quad \text{and}\quad
  \lVert \,(\mathcal R_{\pi,M}\mathcal TU_{\pi})^{+} \rVert \leq C h_{\pi}^{1-\mu}.
 \end{align}
  \item If the entries of $A$ and $B$ are polynomials of degree less than or equal to $N_{AB}$ and $M\geq N+1+N_{AB}$, then (\ref{RTU1}) and (\ref{RTU2}) remain valid, too. 
\end{enumerate}
\end{proposition}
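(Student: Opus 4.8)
The plan is to separate the purely \emph{analytic} estimates (i)--(ii), which rest only on the discrete $L^{2}$-norm identity \eqref{disc.norm}, the inverse inequality \eqref{invineq} (Lemma \ref{l.norm}), and the interpolation bound \eqref{rest}, from the \emph{stability} estimate underlying (iii)--(iv), which is the genuine difficulty. For (i) I would start from $\lVert R_{\pi,M}TU_{\pi}x\rVert_{L^{2}}^{2}=W^{T}\mathcal L W\le\kappa_{u}\lvert W\rvert^{2}$ with $W$ assembled from the nodal values of $w:=TU_{\pi}x=A(Dp)'+Bp$, $p:=U_{\pi}x$ (cf.\ \eqref{disc.norm}, \eqref{norm.equ}). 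Writing $\lvert W\rvert^{2}=\sum_{j,i}(h_{j}/M)\lvert w(t_{ji})\rvert^{2}$ and estimating $\lvert w(t_{ji})\rvert\le\lvert A\rvert_{\infty}\lvert(Dp)'(t_{ji})\rvert+\lvert B\rvert_{\infty}\lvert p(t_{ji})\rvert$, the task reduces to the elementary fact that on a subinterval the nodal quadrature and the $L^{2}$-norm are equivalent on polynomials of fixed degree, with constants independent of $h_{j}$ (rescale to $[0,1]$, where both are norms on a finite-dimensional space). Since $(Dp)'$ has degree $\le N-1$, the components of $p$ degree $\le N$, and $M\ge N+1$, this gives $\lvert W\rvert^{2}\le C(\lVert(Dp)'\rVert_{L^{2}}^{2}+\lVert p\rVert_{L^{2}}^{2})\le C\lVert p\rVert_{H^{1}_{D}}^{2}$; the contraction property $\lVert U_{\pi}x\rVert_{H^{1}_{D}}\le\lVert x\rVert_{H^{1}_{D}}$ of the orthoprojector then yields \eqref{inequ.1}, and the composed bound follows by appending $\lvert T_{BC}U_{\pi}x\rvert\le\lVert T_{BC}\rVert\,\lVert x\rVert_{H^{1}_{D}}$, the $\R^{l}$-component being untouched by $\mathcal R_{\pi,M}$.

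For (ii), with $w=TU_{\pi}x\in\mathcal C^{M}$ on each subinterval (as $A,B\in\mathcal C^{M}$ and $p$ is piecewise polynomial), I would apply \eqref{rest} subinterval-wise and then control $\lVert w^{(M)}\rVert_{\infty}$. The point is that in the Leibniz expansion of $w^{(M)}$ only the terms pairing high-order derivatives $A^{(\ell)},B^{(\ell)}$ with derivatives of $p$ (resp.\ $Dp$) of order at most $N$ survive, the higher derivatives of the polynomial being zero; Markov's inequality on each subinterval then gives $\lVert w^{(M)}\rVert_{\infty,I_{j}}\le C h_{\pi}^{-N}\lVert p\rVert_{\mathcal C^{1}_{D}}$ (absorbing the mesh ratio $r$). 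Inserting this into \eqref{rest} and integrating produces $\lVert R_{\pi,M}w-w\rVert_{L^{2}}\le Ch_{\pi}^{M-N}\lVert p\rVert_{\mathcal C^{1}_{D}}$, and Lemma \ref{l.norm} converts $\lVert p\rVert_{\mathcal C^{1}_{D}}\le C h_{\pi}^{-1/2}\lVert p\rVert_{H^{1}_{D}}$, which supplies the missing half power and delivers the stated $h_{\pi}^{M-N-1/2}$. The composed version is identical because $\mathcal R_{\pi,M}\mathcal TU_{\pi}-\mathcal TU_{\pi}$ has vanishing $\R^{l}$-component.

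The heart of the matter is the lower (stability) estimate underlying (iii) and (iv),
\begin{align*}
\lVert\mathcal Tx\rVert_{L^{2}\times\R^{l}}\ge c\,h_{\pi}^{\mu-1}\,\lVert x\rVert_{H^{1}_{D}},\qquad x\in X_{\pi}.
\end{align*}
I would obtain it from the explicit inverse representation \eqref{DAE}--\eqref{DAEN}: $x=\mathcal T^{-1}(Tx,T_{BC}x)$ splits into a bounded Green/Volterra part, controlled by $\lVert Tx\rVert_{L^{2}}+\lvert T_{BC}x\rvert=\lVert\mathcal Tx\rVert$, and the differential part $\mathfrak D(Tx)$, which involves derivatives of $Tx$ up to order $\mu-1$ with continuous coefficients. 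Since for $x\in X_{\pi}$ the right-hand side $Tx$ is piecewise a polynomial (or, for variable $A,B$, piecewise smooth with the polynomial carrying the $h_{\pi}$-behaviour), the inverse inequality lets each of these $j\le\mu-1$ differentiations cost a factor $h_{\pi}^{-j}$, so that $\lVert\mathfrak D(Tx)\rVert_{H^{1}_{D}}\le Ch_{\pi}^{1-\mu}\lVert\mathcal Tx\rVert$. This is exactly the mechanism visible in the index-$3$ example of Section~\ref{s.Example}, where $T^{-1}$ carries a second derivative.

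Granting this bound, the remainder is bookkeeping. The equality $\ker\mathcal TU_{\pi}=\ker U_{\pi}$ follows from injectivity of $\mathcal T$, and for (iii) the consistency estimate (ii) perturbs the lower bound,
\begin{align*}
\lVert\mathcal R_{\pi,M}\mathcal TU_{\pi}x\rVert\ge\bigl(c\,h_{\pi}^{\mu-1}-C_{AB2}\,h_{\pi}^{M-N-\frac12}\bigr)\lVert x\rVert_{H^{1}_{D}},\qquad x\in X_{\pi},
\end{align*}
which stays positive once $M\ge N+\mu$ (so that $M-N-\tfrac12>\mu-1$) and $h_{\pi}$ is small; this gives $\ker\mathcal R_{\pi,M}\mathcal TU_{\pi}=\ker U_{\pi}$ and $\lVert(\mathcal R_{\pi,M}\mathcal TU_{\pi})^{+}\rVert\le Ch_{\pi}^{1-\mu}$. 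For (iv), if $A,B$ are polynomials of degree $\le N_{AB}$ then $TU_{\pi}x$ is piecewise polynomial of degree $\le N+N_{AB}$, so $M\ge N+1+N_{AB}$ makes interpolation exact, i.e.\ $\mathcal R_{\pi,M}\mathcal TU_{\pi}=\mathcal TU_{\pi}$; then \eqref{RTU2} is literally the displayed lower bound, with no perturbation argument and hence no need for $M\ge N+\mu$. The main obstacle is precisely that stability estimate: making the informal ``$h_{\pi}^{-j}$ per derivative'' rigorous for variable coefficients, where $Tx$ is not a genuine polynomial, is the step that requires the full fine-decoupling structure of \cite{HMT}.
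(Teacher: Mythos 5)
Your proposal is correct in substance and reaches every assertion, but it does not follow the paper's route at two points, and it leaves the same key ingredient unproved that the paper also imports. For (i) you bypass the paper's device entirely: the paper interpolates $A,B$ by piecewise polynomials $\tilde A,\tilde B$ of degree $K-1$ with $K+N\le M$, exploits that $R_{\pi,M}\tilde TU_{\pi}=\tilde TU_{\pi}$ exactly, and bounds the remainder $R_{\pi,M}(\tilde T-T)U_{\pi}$ through \eqref{disc.norm}, \eqref{norm.equ} and Lemma \ref{l.norm}; your direct argument — estimating $\lvert W\rvert^{2}$ nodewise and using that on each subinterval the scaled nodal sum is dominated by the $L^{2}$-norm for the polynomials $(Dp)'$ and $p$ (an inverse inequality, valid irrespective of $M\ge N+1$ for the upper bound) — is more elementary and equally valid, together with the contraction property of the orthoprojector $U_{\pi}$. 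Part (ii) is essentially the paper's proof (Leibniz expansion, vanishing of $(Dx)^{(N+1)}$ and $x^{(N+1)}$, Markov/inverse inequalities for the intermediate derivatives, then \eqref{rest}); the paper packages the derivative bounds via \cite[Lemma 4.2]{HMT} where you invoke Markov's inequality, but the content is the same. In (iii) both arguments hinge on $\lVert(\mathcal TU_{\pi})^{+}\rVert\le C h_{\pi}^{-(\mu-1)}$, which the paper takes verbatim from \cite[Theorem 4.1]{HMT}; your sketch of deriving it from the decoupling \eqref{DAE}--\eqref{DAEN} by charging $h_{\pi}^{-1}$ per differentiation is only a heuristic (for variable $A,B$ the function $Tx$ is not piecewise polynomial, so the inverse inequality does not apply to it directly), and you correctly flag this — so your proof is complete only modulo the same citation the paper uses. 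Given that bound, your perturbation step is a genuinely different and cleaner mechanism: you perturb the lower bound $\lVert\mathcal TU_{\pi}x\rVert\ge c\,h_{\pi}^{\mu-1}\lVert x\rVert_{H^{1}_{D}}$ on $X_{\pi}$ by the consistency error from (ii) and read off $\lVert S^{+}\rVert=\sup_{x\in(\ker S)^{\perp}}\lVert x\rVert/\lVert Sx\rVert$ for the finite-rank operator $S=\mathcal R_{\pi,M}\mathcal TU_{\pi}$, whereas the paper inserts the orthoprojector $\mathcal V_{\pi}$ onto $\im\mathcal TU_{\pi}$ and appeals to \cite[Lemma A.2]{HM1} before unwinding the Moore--Penrose identities; your version avoids that projector bookkeeping at no cost in generality. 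Part (iv) coincides with the paper's one-line argument.
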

\begin{proof}
 (i) We choose a number $K\geq 1$, $K+N\leq M$, $K$ interpolations nodes $0<\tilde\tau_{1}<\cdots<\tilde \tau_{K}<1$ and provide piecewise entrywise polynomial approximations $\tilde A$ and $\tilde B$ of degree $K-1$ such that
 \begin{align*}
  \tilde A(t_{j-1}+\tilde \tau_{i}h_{j})=A(t_{j-1}+\tilde \tau_{i}h_{j}),\;\tilde B(t_{j-1}+\tilde \tau_{i}h_{j})=B(t_{j-1}+\tilde \tau_{i}h_{j}),\quad i=1,\ldots,K,\;j=1,\cdots,n.
 \end{align*}
Then there are  constants $c_{1}, c_{2}$ such that
\begin{align*}
 &\lVert \tilde A-A\rVert_{\infty}\leq c_{1}\,h_{\pi}^{K}, \quad  \lVert \tilde B-B\rVert_{\infty}\leq c_{1}\,h_{\pi}^{K} \\ 
 &\lVert \tilde A\rVert_{\infty}\leq \lVert A\rVert_{\infty} +\lVert \tilde A-A\rVert_{\infty}\leq \lVert A\rVert_{\infty}+c_{1}h_{\pi}^{K}\leq c_{2},\; \lVert \tilde B\rVert_{\infty}\leq \lVert B\rVert_{\infty} +\lVert \tilde B-B\rVert_{\infty}\leq \lVert B\rVert_{\infty}+c_{1}h_{\pi}^{K}\leq c_{2}.
\end{align*}
The auxiliary operator $\tilde T:H^{1}_{D}\rightarrow L^{2}$
\begin{align*}
 \tilde Tx=\tilde A (Dx)'+\tilde B x,\quad x\in H^{1}_{D},
\end{align*}
is bounded, $\lVert \tilde T\rVert\leq \sqrt{2}c_{2}$ and owing to $M\geq N+K$ it holds that $R_{\pi,M}\tilde TU_{\pi}x=\tilde TU_{\pi}x$ for all $x\in H^{1}_{D}$.

Next, to each arbitrary $x\in X_{\pi}$ we form $v:=(\tilde T-T)x \in \mathcal C_{\pi}$ and the vector $V\in \R^{mMn}$, 
\begin{align*}
 V=\begin{bmatrix}
    V_{1}\\\vdots\\V_{n}
   \end{bmatrix}\in \R^{mMn},\quad V_{j}=\left(\frac{h_{j}}{M}\right)^{\frac{1}{2}}
   \begin{bmatrix}
    v(t_{j 1})\\\vdots\\v(t_{j M})
   \end{bmatrix}\in \R^{mM},
\end{align*}
which yields (cf.~(\ref{disc.norm}))
\begin{align*}
 \rVert R_{\pi,M}v\lVert_{L^{2}}^{2}= V^{T}\mathcal L V\leq \kappa_{u} \lvert V\rvert^{2}=\kappa_{u}\sum_{j=1}^{n}\frac{h_{j}}{M}\sum_{i=1}^{M}\lvert v(t_{ji})\rvert^{2}\leq \kappa_{u}(b-a)\lVert v\rVert_{\infty}^{2}.
\end{align*}
On the other hand, regarding Lemma \ref{l.norm} and $x=U_{\pi}x$ we estimate
\begin{align*}
 \lVert v\rVert_{\infty}\leq c_{1}h_{\pi}^{K} \lVert (Dx)'\rVert_{\infty}+c_{1}h_{\pi}^{K} \lVert x\rVert_{\infty}=c_{1}h_{\pi}^{K} \lVert x\rVert_{\mathcal C_{D}^{1}}
 =c_{1}h_{\pi}^{K} \lVert U_{\pi}x\rVert_{\mathcal C_{D}^{1}}\leq c_{1} h_{\pi}^{K-\frac{1}{2}}\frac{1}{\sqrt{\kappa}}
 \lVert x\rVert_{H_{D}^{1}}.
\end{align*}
For each arbitrary $x\in H^{1}_{D}$ it follows that
\begin{align*}
 \lVert R_{\pi,M}TU_{\pi}x\rVert_{L^{2}}&\leq \lVert R_{\pi,M}(\tilde T-T)U_{\pi}x\rVert_{L^{2}}+\lVert R_{\pi,M}\tilde TU_{\pi}x\rVert_{L^{2}}= \lVert R_{\pi,M}v\rVert_{L^{2}}+\lVert \tilde TU_{\pi}x\rVert_{L^{2}}\\
 &\leq \sqrt{\kappa_{u}(b-a)}\, c_{1}\frac{1}{\sqrt{\kappa}} h_{\pi}^{K-\frac{1}{2}}\lVert x\rVert_{H_{D}^{1}}+\sqrt{2}\,c_{2}\lVert x\rVert_{H_{D}^{1}},
\end{align*}
which verifies the inequality (\ref{inequ.1}) with a suitable bound $C_{AB1}$.
Then it also results that
\begin{align*}
 \lVert \mathcal R_{\pi,M}\mathcal TU_{\pi}x\rVert^{2}_{L^{2}\times\R^{l}}=\lVert  R_{\pi,M} TU_{\pi}x\rVert^{2}_{L^{2}}+\lvert  T_{BC}U_{\pi}x\rvert^{2}\leq (C_{AB1}^{2} +\lVert T_{BC}\rVert^{2})\lVert x\rVert^{2}_{H^{1}_{D}},
\end{align*}
 and the assertion is verified.
\medskip

 (ii) To each arbitrary $x\in X_{\pi}$ we set $w=A(Dx)'+Bx$ and derive on each subinterval of the partition $\pi$ that
 \begin{align*}
  w'&=A(Dx)''+A'(Dx)'+Bx'+B'x,\\
  \vdots\\
  w^{(M)}&=A(Dx)^{(M+1)}+\cdots+A^{(M)}(Dx)'+Bx^{(M)}+\cdots+B^{(M)}x.
 \end{align*}
Since $(Dx)^{(N+1)}$ and $x^{(N+1)}$ vanish identically, we obtain the inequality
\begin{align*}
 \lvert w^{(M)}(t)\rvert\leq c_{3} (\lVert x\rVert_{\mathcal C^{1}_{D}}+\lVert x'\rVert_{\mathcal C^{1}_{D}}+\cdots+ \lVert x^{(N)}\rVert_{\mathcal C^{1}_{D}}),
\end{align*}
with a constant $c_{3}$ being determined by the coefficients $A$ and $B$, and their involved  derivatives. By Lemma \ref{l.norm} this yields
\begin{align*}
 \lVert w^{(M)}\rVert_{\infty}\leq c_{3} \frac{1}{\sqrt{\kappa}} h_{\pi}^{-\frac{1}{2}} (\;\lVert x\rVert_{H^{1}_{D}}+\lVert x'\rVert_{H^{1}_{D}}+\cdots+ \lVert x^{(N)}\rVert_{H^{1}_{D}}).
\end{align*}
Owing to \cite[Lemma 4.2]{HMT} it follows that 
\begin{align*}
 \lVert w^{(M)}\rVert_{\infty}&\leq c_{3} \frac{1}{\sqrt{\kappa}} h_{\pi}^{-\frac{1}{2}} \left(\lVert x\rVert_{H^{1}_{D}}+ h_{\pi}^{-1}r\sqrt{C^{*}_{1}}\lVert x\rVert_{H^{1}_{D}}+\cdots+ h_{\pi}^{-N}r^{N}\sqrt{C^{*}_{N}} \lVert x\rVert_{H^{1}_{D}}\right).
\end{align*}
We emphasize that the $C^{*}_{i}$ are also constants independent of the partition and stepsize.
Then there is a constant $c_{4}$ such that
\begin{align*}
 \lVert w^{(M)}\rVert_{\infty}\leq c_{4} h_{\pi}^{-N-\frac{1}{2}}\lVert x\rVert_{H^{1}_{D}}.
\end{align*}
Finally we arrive at
\begin{align*}
 \lVert R_{\pi,M}TU_{\pi}x-TU_{\pi}x\rVert_{L^{2}}=\lVert R_{\pi,M}w-w\rVert_{L^{2}}\leq \frac{\sqrt{b-a}}{M!} h_{\pi}^{M}\lVert w^{(M)}\rVert_{\infty}\leq \frac{\sqrt{b-a}}{M!}c_{4} h_{\pi}^{M-N-\frac{1}{2}}\lVert x\rVert_{H^{1}_{D}}
\end{align*}
and further
\begin{align*}
 \lVert \mathcal R_{\pi,M}\mathcal TU_{\pi}x-\mathcal TU_{\pi}x\rVert_{L^{2}\times \R^{l}}=\lVert R_{\pi,M}TU_{\pi}x-TU_{\pi}x\rVert_{L^{2}}\leq \frac{\sqrt{b-a}}{M!}c_{4} h_{\pi}^{M-N-\frac{1}{2}}\lVert x\rVert_{H^{1}_{D}}.
\end{align*}
(iii)  The Moore-Penrose inverse of $\mathcal TU_{\pi}$ satisfies, by \cite[Theorem 4.1]{HMT}, the inequality
\begin{align*}
 \lVert (\mathcal TU_{\pi})^{+}\rVert\leq \frac{r^{\mu-1}}{c_{\gamma}}h_{\pi}^{-(\mu-1)},
\end{align*}
with a positive constant $c_{\gamma}$.

Denote by $\mathcal V_{\pi}$ and $\mathcal V_{\pi,M}$ the orthoprojectors of $L^{2}\times\R^{l}$ onto $\im \mathcal TU_{\pi}$ and $\im \mathcal R_{\pi,M} \mathcal TU_{\pi}$, respectively. Assertion (ii) implies now
\begin{align*}
\lVert \mathcal V_{\pi}\mathcal R_{\pi,M}\mathcal TU_{\pi}-\mathcal TU_{\pi}\rVert &=\lVert \mathcal V_{\pi}( \mathcal R_{\pi,M}\mathcal TU_{\pi}-\mathcal TU_{\pi})\rVert \\
&\leq \lVert  \mathcal R_{\pi,M}\mathcal TU_{\pi}-\mathcal TU_{\pi}\rVert\leq C_{AB2}h_{\pi}^{M-N-\frac{1}{2}},
\end{align*}
and further, for sufficiently fine partitions,
\begin{align*}
 \lVert (\mathcal TU_{\pi})^{+}\rVert\;\lVert \mathcal V_{\pi}\mathcal R_{\pi,M}\mathcal TU_{\pi}-\mathcal TU_{\pi}\rVert\leq C_{AB2}\frac{r^{\mu-1}}{c_{\gamma}}h_{\pi}^{M-N-\mu+\frac{1}{2}}\leq \frac{1}{2}.
\end{align*}
Next we represent
\begin{align*}
 \mathcal V_{\pi}\mathcal R_{\pi,M}\mathcal TU_{\pi}=\underbrace{\mathcal V_{\pi}(\mathcal R_{\pi,M}\mathcal TU_{\pi}-\mathcal TU_{\pi})}_{=:\mathfrak E}+\underbrace{\mathcal TU_{\pi}}_{=:\mathfrak A}.
\end{align*}
The subspace $\im \mathfrak A=\im \mathcal TU_{\pi}$ has finite dimension, $\im \mathfrak E\subseteq \im \mathfrak A$. Furthermore, it holds that  $\ker \mathfrak A=\ker \mathcal TU_{\pi}=\ker U_{\pi}$ and  $\ker \mathfrak E\supseteq \ker U_{\pi}$. By \cite[Lemma A.2]{HM1} it follows that
\begin{align*}
 \ker\mathcal V_{\pi}\mathcal R_{\pi,M}\mathcal TU_{\pi}=\ker \mathfrak A,\quad \lVert (\mathcal V_{\pi}\mathcal R_{\pi,M}\mathcal TU_{\pi})^{+}\rVert\leq \frac{\lVert \mathfrak A^{+}\rVert}{1-\lVert \mathfrak A^{+}\rVert \;\lVert \mathfrak E\rVert}\leq2 \lVert \mathfrak A^{+}\rVert.
\end{align*}
Now the inclusions
\begin{align*}
 \ker U_{\pi}\subseteq \ker \mathcal R_{\pi,M}\mathcal TU_{\pi}\subseteq \ker\mathcal V_{\pi}\mathcal R_{\pi,M}\mathcal TU_{\pi}=\ker U_{\pi}
\end{align*}
hold and, hence, $\ker\mathcal R_{\pi,M}\mathcal TU_{\pi}=\ker U_{\pi}$. In the end we compute
\begin{align*}
 (\mathcal R_{\pi,M}\mathcal TU_{\pi})^{+}&=(\mathcal R_{\pi,M}\mathcal TU_{\pi})^{+}\mathcal R_{\pi,M}\mathcal TU_{\pi}(\mathcal R_{\pi,M}\mathcal TU_{\pi})^{+}=U_{\pi}(\mathcal R_{\pi,M}\mathcal TU_{\pi})^{+}\\
 &=(\mathcal V_{\pi}\mathcal R_{\pi,M}\mathcal TU_{\pi})^{+}\mathcal V_{\pi}\mathcal R_{\pi,M}\mathcal TU_{\pi}\;(\mathcal R_{\pi,M}\mathcal TU_{\pi})^{+}=(\mathcal V_{\pi}\mathcal R_{\pi,M}\mathcal TU_{\pi})^{+}\mathcal V_{\pi}\mathcal V_{\pi,M},
\end{align*}
and hence $\lVert(\mathcal R_{\pi,M}\mathcal TU_{\pi})^{+}\rVert\leq 2\lVert \mathfrak A^{+}\rVert$.

(iv) This assertion is a direct consequence of the fact that $TU_{\pi}x$ is piecewise polynomial of degree less than or equal to $N+N_{AB}$.
\end{proof}

\subsection{Error estimations}
Recall that $x_{*}$ denotes the sought solution, i.e., $\mathcal Tx_{*}=y$ for given $y=(g,d)\in \im T\times \R^{l}$.
As descibed in Subsection \ref{subs.Basic} the overdetermined polynomial collocation actually means that we generate the  minimizer   of the functional $\phi_{\pi,M}$, cf. (\ref{lsc}), that is,
\begin{align*}
 \tilde x_{\pi}={\rm argmin}\{\, \phi_{\pi,M}(x):x\in X_{\pi}\}={\rm argmin}\{\, \lVert \mathcal R_{\pi,M}(\mathcal TU_{\pi}x-y)\rVert^{2}_{L^{2}\times\R^{l}}:x\in X_{\pi}\}=(\mathcal R_{\pi,M}\mathcal TU_{\pi})^{+}\mathcal R_{\pi,M}y,
\end{align*}
to approximate $x_{*}$. Now we provide a corresponding error estimation.

Suppose that the solution is smooth, $x_{*}\in \mathcal C^{N}([a,b],\R^m)$, $Dx_{*}\in \mathcal C^{N+1}([a,b],\R^m)$.
With the $N$ nodes $0<\tau_{*\,1}<\cdots<\tau_{*\,N}<1$,
the interpolating function $p_{*}\in X_{\pi}$ uniquely defined by
\begin{align*}
 p_{*}(t_{j-1}+\tau_{*\,i}h_{j})=x_{*}(t_{j-1}+\tau_{*\,i}h_{j}),\quad i=1,\ldots,N,\;j=1,\ldots, n,\quad Dp_{*}(t_{0})=Dx_{*}(t_{0}),
\end{align*}
satisfies the inequalities
\begin{align*}
 \lVert p_{*}-x_{*}\rVert_{\mathcal C^{1}_{D}}&\leq c_{*} h_{\pi}^{N},\\
  \lVert p_{*}-x_{*}\rVert_{H^{1}_{D}}&\leq  \sqrt{b-a}\;
c_{*} h_{\pi}^{N}=: c_{\alpha} h_{\pi}^{N},\\
\lVert U_{\pi}x_{*}-x_{*}\rVert_{H^{1}_{D}}&\leq
 \lVert p_{*}-x_{*}\rVert_{H^{1}_{D}}\leq  c_{\alpha} h_{\pi}^{N},
\end{align*}
in which the constant  $c_{*}$ is determined by $x_{*}$.
Next, owing to Proposition~\ref{p.RTU}(i), we may estimate
\begin{align*}
 \lVert \mathcal R_{\pi,M}( \mathcal Tp_{*}- \mathcal TU_{\pi}x_{*})\rVert &= \lVert \mathcal R_{\pi,M}( \mathcal TU_{\pi}p_{*}- \mathcal TU_{\pi}x_{*})\rVert= \lVert \mathcal R_{\pi,M} \mathcal TU_{\pi}(p_{*}-x_{*})\rVert \\
 &\leq (C_{AB}^{2}+\lVert T_{BC}\rVert^{2})^{\frac{1}{2}} \lVert p_{*}-x_{*}\rVert_{H^{1}_{D}}
 \leq (C_{AB}^{2}+\lVert T_{BC}\rVert^{2})^{\frac{1}{2}} c_{\alpha} h_{\pi}^{N}.
\end{align*}
Denoting $w_{*}=T(x_{*}-p_{*})\in \mathcal C_{\pi}$ and using the Lagrange basis polynomials we further derive 
\begin{align*}
 \lVert R_{\pi,M}w_{*}\rVert_{\infty}\leq \max_{j=1,\ldots,n}\; \max_{t_{j-1}\leq t\leq t_{j}}\sum_{i=1}^{M}\lvert l_{ji}(t)\rvert \;\lVert w_{*} \rVert_{\infty}=\max_{0\leq \tau\leq 1}\sum_{i=1}^{M} \prod_{s=1,s\neq i}^{M} \left\lvert\frac{\tau-\tau_{s}}{\tau_{i}-\tau_{s}}\right\rvert \;\lVert w_{*} \rVert_{\infty}=:C_{L}\lVert w_{*} \rVert_{\infty}.
\end{align*}
Because of 
\begin{align*}
 \lVert w_{*}\rVert_{\infty}\leq \max \{\lVert A\rVert_{\infty}, \lVert B\rVert_{\infty}\}\;\lVert x_{*}-p_{*}\rVert_{\mathcal C_{D}^{1}}\leq
 \max \{\lVert A\rVert_{\infty}, \lVert B\rVert_{\infty}\}\; c_{*} h_{\pi}^{N}
\end{align*}
we obtain 
\begin{align*}
  \lVert R_{\pi,M}T(x_{*}-p_{*})\rVert_{\infty}=\lVert R_{\pi,M}w_{*}\rVert_{\infty}\leq C_{L}\max \{\lVert A\rVert_{\infty}, \lVert B\rVert_{\infty}\}\; c_{*} h_{\pi}^{N},
\end{align*}
such that
\begin{align*}
 \lVert \mathcal R_{\pi,M}\mathcal T(x_{*}-p_{*}) \rVert^{2}_{L^{2}\times\R^{l}}&=\lVert R_{\pi,M} T(x_{*}-p_{*} )\rVert_{L^{2}}^{2}+\lvert T_{BC}(x_{*}-p_{*})\rvert^{2}\\
 &\leq (b-a) C_{L}^{2} \max \{\lVert A\rVert_{\infty}^{2}, \lVert B\rVert_{\infty}^{2}\} c_{*}^{2} h_{\pi}^{2N}+\,\lVert T_{BC}\rVert c_{\alpha}^{2} h_{\pi}^{2N} =: C_{R} h_{\pi}^{2N}.
\end{align*}
Proposition~\ref{p.RTU}(iii) implies, for $M\geq N+\mu$, that $U_{\pi}=(\mathcal R_{\pi,M}\mathcal TU_{\pi})^{+}\mathcal R_{\pi,M}\mathcal TU_{\pi}$, which gives rise to the error  representation 
\begin{align*}
 \tilde x_{\pi}-U_{\pi}x_{*}&=(\mathcal R_{\pi,M}\mathcal TU_{\pi})^{+}\mathcal R_{\pi,M}y-(\mathcal R_{\pi,M}\mathcal TU_{\pi})^{+}\mathcal R_{\pi,M}\mathcal TU_{\pi}x_{*}\\
 &=(\mathcal R_{\pi,M}\mathcal TU_{\pi})^{+}\mathcal R_{\pi,M}\mathcal Tx_{*}-(\mathcal R_{\pi,M}\mathcal TU_{\pi})^{+}\mathcal R_{\pi,M}\mathcal TU_{\pi}x_{*}\\
 &=(\mathcal R_{\pi,M}\mathcal TU_{\pi})^{+}\{\mathcal R_{\pi,M}\mathcal T(x_{*}-p_{*})-\mathcal R_{\pi,M}\mathcal TU_{\pi}(x_{*}-p_{*})\},
\end{align*}
so that 
\begin{align*}
 \lVert \tilde x_{\pi}-U_{\pi}x_{*}\rVert_{H^{1}_{D}}&\leq \lVert (\mathcal R_{\pi,M}\mathcal TU_{\pi})^{+}\rVert \; (\,\lVert \mathcal R_{\pi,M}\mathcal T(x_{*}-p_{*}) \rVert+\lVert \mathcal R_{\pi,M}\mathcal TU_{\pi}(x_{*}-p_{*})\rVert\,)\\
 &\leq C\, h_{\pi}^{1-\mu} (\sqrt{C_{R}}h_{\pi}^{N}+ (C_{AB}^{2}+\lVert T_{BC}\rVert^{2})^{\frac{1}{2}} c_{\alpha} h_{\pi}^{N}) =: C_{*} h_{\pi}^{N-\mu+1}.
\end{align*}
At the end we arrive at 
\begin{align*}
  \lVert \tilde x_{\pi}-x_{*}\rVert_{H^{1}_{D}}\leq  \lVert \tilde x_{\pi}-U_{\pi}x_{*}\rVert_{H^{1}_{D}} +  \lVert x_{*}-U_{\pi}x_{*}\rVert_{H^{1}_{D}}\leq (C_{*}+c_{\alpha}) h_{\pi}^{N-\mu+1}.
\end{align*}
We summarize the result in the following theorem.
\begin{theorem}\label{t.main}
Let the DA operator $T$ be fine with index $\mu\in \N$ and $T_{BC}$ be accurately stated. Let $M\geq N+\mu$ and let the entries of $A$ and $B$ be of class $\mathcal C^{M}$. Let $g\in \im T$, and the solution $x_{*}=\mathcal T^{-1}(g,d)$ be of class $\mathcal C^{N}$ with $Dx_{*}$ of class $\mathcal C^{N+1}$.
Then there is a constant $\bar C$ such that the error estimation
\begin{align}\label{estimation}
  \lVert \tilde x_{\pi}-x_{*}\rVert_{H^{1}_{D}}\leq \bar C h_{\pi}^{N-\mu+1}
\end{align}
is valid  
for all sufficiently fine partitions $\pi$. 
\end{theorem}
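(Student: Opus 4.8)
The statement collects into one inequality the chain of estimates assembled from Proposition~\ref{p.RTU}, so the plan is to carry out a Strang-type consistency–stability argument. I would split the total error as $\tilde x_\pi - x_* = (\tilde x_\pi - U_\pi x_*) + (U_\pi x_* - x_*)$, treat the \emph{projection part} $U_\pi x_* - x_*$ by classical interpolation, and control the \emph{scheme part} $\tilde x_\pi - U_\pi x_*$ through the generalized inverse of the discrete operator $\mathcal R_{\pi,M}\mathcal T U_\pi$.

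For the projection part I would fix the interpolant $p_*\in X_\pi$ of $x_*$ on $N$ nodes per subinterval, with the extra condition $Dp_*(t_0)=Dx_*(t_0)$ to pin down the $D$-derivative. The smoothness hypotheses $x_*\in\mathcal C^N$ and $Dx_*\in\mathcal C^{N+1}$ then deliver the standard bounds $\lVert p_*-x_*\rVert_{\mathcal C^1_D}\leq c_* h_\pi^{N}$ and hence $\lVert p_*-x_*\rVert_{H^1_D}\leq c_\alpha h_\pi^{N}$; since $U_\pi$ is the orthoprojection onto $X_\pi$ and $p_*\in X_\pi$, the best-approximation property gives $\lVert x_*-U_\pi x_*\rVert_{H^1_D}\leq\lVert x_*-p_*\rVert_{H^1_D}\leq c_\alpha h_\pi^{N}$, which already settles this part at order $N$.

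For the scheme part I would invoke Proposition~\ref{p.RTU}(iii): the hypothesis $M\geq N+\mu$ yields both the identity $U_\pi=(\mathcal R_{\pi,M}\mathcal T U_\pi)^{+}\mathcal R_{\pi,M}\mathcal T U_\pi$ and the stability bound $\lVert(\mathcal R_{\pi,M}\mathcal T U_\pi)^{+}\rVert\leq C h_\pi^{1-\mu}$. Writing $\tilde x_\pi=(\mathcal R_{\pi,M}\mathcal T U_\pi)^{+}\mathcal R_{\pi,M}y$, using $\mathcal T x_*=y$, and inserting $p_*$ via $U_\pi p_*=p_*$, one obtains the error representation $\tilde x_\pi-U_\pi x_*=(\mathcal R_{\pi,M}\mathcal T U_\pi)^{+}\{\mathcal R_{\pi,M}\mathcal T(x_*-p_*)-\mathcal R_{\pi,M}\mathcal T U_\pi(x_*-p_*)\}$. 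The second term in the braces is $O(h_\pi^N)$ directly from Proposition~\ref{p.RTU}(i) applied to $x_*-p_*$; the first is $O(h_\pi^N)$ once $\lVert R_{\pi,M}T(x_*-p_*)\rVert_\infty$ is bounded through the uniform Lebesgue constant $C_L$ of the node set together with $\lVert T(x_*-p_*)\rVert_\infty\leq\max\{\lVert A\rVert_\infty,\lVert B\rVert_\infty\}\,\lVert x_*-p_*\rVert_{\mathcal C^1_D}$. Multiplying the $O(h_\pi^N)$ consistency terms by the $O(h_\pi^{1-\mu})$ stability factor gives $\lVert\tilde x_\pi-U_\pi x_*\rVert_{H^1_D}=O(h_\pi^{N-\mu+1})$, and a final triangle inequality with the projection estimate yields the claimed order.

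The decisive and only genuinely nonroutine ingredient is the stability estimate $\lVert(\mathcal R_{\pi,M}\mathcal T U_\pi)^{+}\rVert\leq C h_\pi^{1-\mu}$; everything else is interpolation bookkeeping. This is precisely where the index enters, and the factor $h_\pi^{1-\mu}$ consumes $\mu-1$ of the $N$ orders of consistency, leaving the net order $N-\mu+1$. I expect the threshold $M\geq N+\mu$ to be exactly what is needed so that the perturbation $\mathcal R_{\pi,M}\mathcal T U_\pi-\mathcal T U_\pi$, of size $O(h_\pi^{M-N-1/2})$ by Proposition~\ref{p.RTU}(ii), stays small relative to $\lVert(\mathcal T U_\pi)^{+}\rVert^{-1}\sim h_\pi^{\mu-1}$, so that the perturbed generalized inverse merely inherits the $h_\pi^{1-\mu}$ blow-up of the unperturbed one rather than degrading further.
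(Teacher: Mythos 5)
Your proposal is correct and follows essentially the same route as the paper: the same splitting $\tilde x_\pi-x_*=(\tilde x_\pi-U_\pi x_*)+(U_\pi x_*-x_*)$, the same interpolant $p_*$ with $Dp_*(t_0)=Dx_*(t_0)$ and best-approximation bound, the same error representation via $U_\pi=(\mathcal R_{\pi,M}\mathcal T U_\pi)^{+}\mathcal R_{\pi,M}\mathcal T U_\pi$, and the same combination of the $O(h_\pi^{N})$ consistency terms (one via Proposition~\ref{p.RTU}(i), one via the Lebesgue constant) with the $O(h_\pi^{1-\mu})$ stability bound from Proposition~\ref{p.RTU}(iii). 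Your closing remark on why $M\geq N+\mu$ is the right threshold also matches the perturbation argument used in the paper's proof of that proposition.
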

In contrast to Theorem \ref{t.main}, the earlier error estimation from \cite[Theorem 3.1 (a)]{HMT} is given for the least-squares approximation $x_{\pi}$ (cf.~(\ref{lscF})),
\begin{align*}
 x_{\pi}={\rm argmin}\{\, \phi(x):x\in X_{\pi}\}={\rm argmin}\{\, \lVert \mathcal TU_{\pi}x-y\rVert^{2}_{L^{2}\times\R^{l}}:x\in X_{\pi}\}=(\mathcal TU_{\pi})^{+}y.
\end{align*}
Then, supposing the entries of $A$ and $B$ to be polynomials of degree less than or equal to $N_{AB}$ and letting $M\geq N+1+N_{AB}$ the estimation (\ref{estimation}) is derived by a different technique (\cite[Theorem 5.1(a)]{HMT}). In the present context of operator properties this means that then the operators $ R_{\pi,M} TU_{\pi}$ and $ TU_{\pi}$ coincide, see Proposition \ref{p.RTU}. 

So far we know only sufficient convergence and order conditions. The question concerning an appropriate or even optimal choice of $N\geq 1$ and $M\geq N+1$ remains open. Of course, smaller $N$ and $M$ are associated with less computational effort. So far, in experiments $M=N+1$ works quite well. In general,  the practical performance is much better than we can substantiate till now. Much further analysis is needed. 

\section{Higher-order DA operators}\label{s.H}
General linear order-$s$ DA operators have the form (e.g.,~\cite{Cist,Bulatov,Mehr})
\begin{align}\label{H1}
 \mathring Tx=E_{s}x^{(s)}+\cdots+E_{1}x'+E_{0}x,\quad x\in \dom \mathring T=\mathcal C^{s}([a,b],\R^{m}),
\end{align}
with at least continuous matrix-coefficients $E_{i}$ and a singular leading coefficient $E_{s}$.
So far the consolidated knowledge of higher-order DAEs and the related operators is rather poor. 
Naturally the class of higher-order DA operators is much more complex than the class of first-order ones, nevertheless the numerical treatment of the corresponding DAEs might be often easier than expected.
In particular, systems of ODEs of mixed order, which can be handled by traditional approved software packages such as  COLNEW, COLDAE and BVPSUITE (\cite{BaderAscher,AscherSpiteri,KKPSW}),  actually apply to higher-order DAEs directly. For instance, the simple system
\begin{align*}
 x_{1}''+x_{1}=g_{1},\\
 x_{2}' +x_{1}+x_{2}=g_{2},
\end{align*}
corresponds to the DA operator $\mathring T:\dom \mathring T=\mathcal C^{2}([a,b],\R^{2})\subset \mathcal C([a,b],\R^{2})\rightarrow\mathcal C([a,b],\R^{2})$
\begin{align*}
 \mathring Tx=\begin{bmatrix}
     1&0\\0&0
    \end{bmatrix}x''
+\begin{bmatrix}
     0&0\\0&1
    \end{bmatrix}x'+\begin{bmatrix}
     1&0\\1&1
    \end{bmatrix}x,\quad x\in \dom \mathring T,
\end{align*}
and its extension $T$, with  $\dom T=\{z\in \mathcal C([a,b],\R^{2}):z_{1}\in \mathcal C^{2}([a,b],\R),z_{2}\in\mathcal C^{1}([a,b],\R) \}$,
\begin{align*}
 Tx=\begin{bmatrix}
     1\\0
    \end{bmatrix}
   ( \begin{bmatrix}
     1&0
    \end{bmatrix}x)''
+\begin{bmatrix}
     0\\1
    \end{bmatrix}
    (\begin{bmatrix}
     0&1
    \end{bmatrix}
    x)'+\begin{bmatrix}
     1&0\\1&1
    \end{bmatrix}x,\; x\in \dom T.
\end{align*}
One has $\im T=\mathcal C([a,b],\R^{2})$ and $\dim\ker T=3$, and hence $T$ is fredholm, so that IVPs and  BVPs can be stated in a well-posed way.
It arises the question
which further DA operators allow a reliable direct treatment of the corresponding DAE. For the time being we are not able to present a general answer. Below, we survey certain related aspects.
\medskip

In the early paper \cite{AscherPetzold1992}   higher-order Hessenberg DAEs arising from higher-order  ODEs subject to constraints are introduced and analyzed with respect to their perturbation index. 
Written in the form \eqref{H1} the associated operators are
\begin{align}\label{H3}
 \mathring Tx=
 \begin{bmatrix}
  I&0\\0&0
 \end{bmatrix}x^{(s)}+
 \sum_{i=1}^{s-1}
\begin{bmatrix}
  E_{i,11}&0\\E_{i,21}&0
 \end{bmatrix}x^{(i)}+
 \begin{bmatrix}
  E_{0,11}& E_{0,12}\\ E_{0,21}& E_{0,22}
 \end{bmatrix}x,\quad x\in \dom \mathring T=\mathcal C^{s}([a,b],\R^{m})
\end{align}
and, with properly involved derivatives by the additional matrix  
$D= \begin{bmatrix}
     I&0 \end{bmatrix}$, $\rank D=m_{1}$,
\begin{align}\label{H2}
 Tx&=
 \begin{bmatrix}
  I\\0
 \end{bmatrix}(Dx)^{(s)}+
 \sum_{i=1}^{s-1}
\begin{bmatrix}
  E_{i,11}\\E_{i,21}
 \end{bmatrix}(Dx)^{(i)}+
 \begin{bmatrix}
  E_{0,11}& E_{0,12}\\ E_{0,21}& E_{0,22}
 \end{bmatrix}x,\\
 &\quad x\in \dom  T=\{x\in \mathcal C([a,b],\R^{m}):Dx\in \mathcal C^{s}([a,b],\R^{m_{1}})\}.\nonumber
\end{align}
We set $X_{T}:=\dom T$ and introduce the norm $\lVert x\rVert= \lVert x\rVert_{\infty}+ \lVert Dx\rVert_{H^{s}}$, $x\in X_{T}$, so that $T: X_{T}\rightarrow \mathcal C([a,b],\R^{m})$ is bounded.
\medskip

If $E_{0,22}$ remains nonsingular, then it is evident that 
\[\im T=\mathcal C([a,b],\R^{m}),\quad \dim\ker T= s\,m_{1},
\]
i.e., $T$ is also fredholm. Therefore the operator $T$ is marked as index-1 operator independently of $s\geq 1$. The corresponding IVPs and BVPs are well-posed and can be treated even by standard polynomial collocation. By introducing new variables $v_{i}=(Dx)^{i},\;i=1,\ldots,s-1$, one obtains an associated first-order formulation of the DAE in the variables $v_{1},\ldots,v_{s-1}, x$, which has the same index 1.
\medskip

If $E_{0,22}$ vanishes identically, but $E_{s-1,21}E_{0,12}$ remains nonsingular, then 
\[\im T=\{g\in \mathcal C([a,b],\R^{m_{1}+m_{2}}):E_{0,12}(E_{s-1,21}E_{0,12})^{-1}g_{2}\in \mathcal C^{1}([a,b],\R^{m_{1}}) \},
\]
and $T$ is marked as index-2 operator independently of $s\geq1$. Although $\im T$ is a nonclosed subset in $\mathcal C([a,b],\R^{m})$, utilizing the special problem structure and incorporating special projections, corresponding BVPs with accurately stated boundary conditions can be directly treated by COLDAE, see \cite{AscherPetzold1992}. Again, substituting the derivatives $(Dx)^{(i)}$ by new variables in the DAE leads to a first-order index-2 system which can be solved by overdetermined least-squares collocation, too.
\medskip

If $E_{0,22}$ as well as $E_{1,21},\ldots,E_{s-1,21} $ vanish identically, but $E_{0,21}E_{0,12}$ remains nonsingular, then the operator $T$ has index $s+1$, and
\[\im T=\{g\in \mathcal C([a,b],\R^{m_{1}+m_{2}}):E_{0,12}(E_{0,21}E_{0,12})^{-1}g_{2}\in \mathcal C^{s}([a,b],\R^{m_{1}}) \}.
\]
Also here, substituting the derivatives $(Dx)^{(i)}$ by new variables leads to an index-$(s+1)$ first-order DAE, and the latter can be treated by overdetermined least-squares collocation.
\bigskip

The index notion used in \cite{AscherPetzold1992} is the perturbation index $\mu\in \N$ of a suitable first-order formulation\footnote{Not surprisingly, the classical procedure of turning a higher-order ODE into a first-order system applied to a DAE increases the differentiation index and leads to different solvability results and smoothness requirements. For details and examples we refer to \cite{Mehr}.}. In the operator context  we consider the extension $T$ of $\mathring T:\dom \mathring T\subset \mathcal C([a,b],\R^{m})\rightarrow  \mathcal C([a,b],\R^{m})$, turn then to the bounded operator $T:X_{T}\rightarrow \mathcal C([a,b],\R^{m})$ and show that the elements $g$ of $\im T$ are involved therein  together with parts of derivatives up to order $\mu-1$, and $\mu$ is the smallest such number. 

In contrast to the standard form \eqref{H1}, the version with properly involved derivatives can be seen as source of a reasonable first-order formulation. We conjecture that this idea applies also to further classes of DAEs. To emphasize the capabilities of this idea we use \cite[Example]{Mehr} for a demonstration.

\begin{example}\label{e.Mehr}
We refactorize the leading term of the second order operator $\mathring T$ given by
\begin{align*}
 \mathring Tx=
 \begin{bmatrix}
  1&t+1\\t&t^{2}+t
 \end{bmatrix}x''+
\begin{bmatrix}
  0&2\\0&2t
 \end{bmatrix}x'+
 \begin{bmatrix}
  1&t\\1+t&t^{2}+t+1
 \end{bmatrix}x
\end{align*}
by 
\begin{align*}
E_{2}=
 \begin{bmatrix}
  1&t+1\\t&t^{2}+t
 \end{bmatrix}=
 \begin{bmatrix}
  1\\t
 \end{bmatrix}\begin{bmatrix}
  1&t+1
 \end{bmatrix}=:A_{2}D,\quad Dx''=(Dx)''-2D'x'.
\end{align*}
The resulting operator T reads
\begin{align*}
  Tx&=A_{2}(Dx)''+E_{0}x=
 \begin{bmatrix}
  1\\t
 \end{bmatrix}(Dx)''+
 \begin{bmatrix}
  1&t\\1+t&t^{2}+t+1
 \end{bmatrix}x,\\
 X_{T}&=\{x\in \mathcal C([a,b],\R^{2}):Dx\in \mathcal C^{2}([a,b],\R )\},\\
 \im T&=\{g\in \mathcal C([a,b],\R^{2}):[-t\,1]g\in \mathcal C^{2}([a,b],\R) \}.
\end{align*}
Introducing the new variable $v=(Dx)'=x_{1}'+(t+1)x_{2}'+x_{2}$ leads to a first-order formulation with index $\mu=3$.\footnote{In \cite{Mehr} merely an index-4 first-order formulation was obtained with $v=x_{1}'+(t+1)x_{2}'$. }
\end{example}
\bigskip

A completely different approach to operators \eqref{H1} and the corresponding DAEs is proposed in \cite{Cist} in the context of a ring of  operators acting on $\mathcal C^{\infty}([a,b],\R^m)$, among them arbitrary-order differential operators of the form \eqref{H1} with real-analytic coefficients\footnote{Note that the usual semi-norm family defining the topology of $\mathcal C^{\infty}$ is much too strong to measure practically relevant approximation errors, e.g., \cite[Subsection 2.4.2]{Mae2014}}. This serves as background of the following index notion by means of \textit{left regularizing operators}
\begin{definition}\label{LRO}
Each operator $Ly=\sum_{i=0}^{k}F_{i}y^{(i)}$, $y\in \mathcal C^{k}([a,b],\R^m)$, such that the superposition $L\circ\mathring T$ is a regular ODE of the same order as $\mathring T$, i.e.,
\begin{align*}
 (L\circ \mathring T)x=\sum_{i=0}^{s}\bar E_{i}x^{(i)},\; x\in \mathcal C^{s+k}([a,b],\R^m), \quad \bar E_{s}\; \text{nonsingular,}
\end{align*}
is called left regularizing operator of $\mathring T$. The smallest possible $k$ is said to be the index  of $\mathring T$.
\end{definition}
Here we denote the index in the sense of Definition \ref{LRO} by $\mu_{C}$. The construction of left regularizers is closely related to the evaluation of derivative arrays, see \cite{Cist}. For $s=1$, constructing a left regularizer is equivalent to providing a so-called \textit{completion} ODE (\cite{Campbell1985}) and $\mu_{C}$ equals the differentiation index. In turn, for regular first-order DAEs the differentiation index equals the perturbation index and also the tractability index as well. 

However, for $s>1$ things are completely  different, so that $\mu_{C}$ is no longer helpful in view of the practical treatment of higher-order DAEs.  The following two simple examples allow a first insight. Both examples have the form \eqref{H3} resp. \eqref{H2}. We compare the perturbation index $\mu$ applied in \cite{AscherPetzold1992} and $\mu_{C}$.
\begin{example}[$\mu=1$,\; $\mu_{C}=s$]\label{e.H1}
\begin{align*} 
 \mathring Tx=
 \begin{bmatrix}
  x_{1}^{(s)}+x_{1}\\x_{2}
 \end{bmatrix},\quad 
L=
\begin{bmatrix}
 y_{1}\\y_{2}^{(s)}
 \end{bmatrix},\quad 
 (L\circ \mathring T)x=
 \begin{bmatrix}
  x_{1}^{(s)}+x_{1}\\x_{2}^{(s)}
 \end{bmatrix},\
\end{align*} 
\end{example}
\begin{example}[$\mu=s+1$,\; $\mu_{C}=2s$]\label{e.H2}
\begin{align*}
 \mathring Tx=
 \begin{bmatrix}
  x_{1}^{(s)}+x_{3}\\x_{2}^{(s)}\\x_{1}
 \end{bmatrix},\quad 
L=
\begin{bmatrix}
 y_{3}^{(s)}\\y_{2}\\y_{1}^{(s)}-y_{3}^{(2s)}
 \end{bmatrix},\quad 
 (L\circ \mathring T)x=
 \begin{bmatrix}
  x_{1}^{(s)}\\x_{2}^{(s)}\\x_{3}^{(s)}
 \end{bmatrix},\
\end{align*}
\end{example}

We finish by mentioning that in \cite{Mehr, Wunderlich}  the given DAE is transformed via derivative arrays to a so-called \textit{strangeness-free} mixed-order system which then can be handled by standard methods. Under additional quite special conditions, such a system is provided by evaluating  involved  matrix polynomials in \cite{Bulatov}.

\section{Conclusions}
We have explored properties of regular first-order DA operators and their finite-dimensional counterparts associated with the polynomial overdetermined least-squares collocation and provided an new convergence result.

We notice substantial progess in view of the consolidation of the polynomial overdetermined least-squares collocation. Nevertheless, there are essential open questions, e.g., concerning the choice of $N$ and $M$. 

Furthermore we have surveyed corresponding results concerning higher-order operators and the direct treatment of higher-order DAEs.

\bibliographystyle{plain} 
\bibliography{hm}

\end{document}